\documentclass[a4paper,10pt]{extarticle}
\usepackage{authblk}
\usepackage[english]{babel}
\usepackage[utf8]{inputenc}
\usepackage[T1]{fontenc}
\usepackage{libertine}
\usepackage{url}
\usepackage{hyperref}
\usepackage{graphicx}

\usepackage{amsmath,amssymb,amsthm,amsbsy}
\usepackage{bbm} 

\usepackage{geometry}
\geometry{left=2cm, right=2cm}

\usepackage{fancyvrb}

\theoremstyle{plain}
\newtheorem{remark}{Remark}
\newtheorem*{remark*}{Remark}

\newtheorem{theorem}{Theorem}
\newtheorem{proposition}[theorem]{Proposition}
\newtheorem{corollary}[theorem]{Corollary}
\newtheorem{lemma}{Lemma}[section]

%
\newcommand{\Cc}{\mathbb{C}} 
\newcommand{\N}{\mathbb{N}} 
\newcommand{\R}{\mathbb{R}} 

\newcommand{\ba}{\mathbf{a}}

\newcommand{\bu}{\mathbf{u}}
\newcommand{\bx}{\mathbf{x}}
\newcommand{\by}{\mathbf{y}}

\newcommand{\bU}{\mathbf{U}}
\newcommand{\bX}{\mathbf{X}}
\newcommand{\bY}{\mathbf{Y}}

\newcommand{\balpha}{\boldsymbol{\alpha}}

\newcommand{\Det}[1]{\mathrm{det}\left(#1\right)}
\newcommand{\Esp}[1]{\mathbb{E}\left[#1\right]}

\newcommand{\calD}{\mathcal{D}}
\newcommand{\calU}{\mathcal{U}}
\newcommand{\calX}{\mathcal{X}}
\newcommand{\calY}{\mathcal{Y}}


%

\newcommand{\eqdef}{\overset{\mathrm{def}}{=}}

\title{A New Probabilistic Representation of the Alternating Zeta Function and
a New Selberg-like Integral Evaluation}

\author{Serge Iovleff}
\affil{University de Technologie de Belfort-Montb\'eliard\\
P\^ole Énergie-Informatique,\\
13 rue Ernest Thierry-Mieg,\\
90010 Belfort cedex, France

Serge.Iovleff@utbm.fr}
\affil{
Laboratoire Paul Painlev\'e, \\
CNRS U.M.R. 8524,\\
59 655 Villeneuve d'Ascq Cedex, France
}
\affil{Inria, Team Modal, Parc scientifique de la Haute-Borne\\
40, avenue Halley – Bât A – Park Plaza\\
59650 Villeneuve d’Ascq – France}
\begin{document}

\maketitle

\begin{abstract}
In this paper, we present two new representations of the alternating Zeta function.
We show that for any $s\in\Cc$ this function can be computed as a limit
of a series of determinant. We then express these determinants as the
expectation of a functional of a random vector with
Dixon-Anderson density. The generalization
of this representation to more general alternating series allows us
to evaluate a Selberg-type integral with a generalized Vandermonde determinant.
\end{abstract}

\section{Introduction}

The Dirichlet eta function is defined by the following
Dirichlet series, which converges for any complex number
having real part greater than $0$
\begin{equation}\label{eq:dirichlet}
\eta(s) = \sum_{n=1}^{\infty} \frac{(-1)^{n-1}}{n^s} = \frac{1}{1^s} - \frac{1}{2^s} + \frac{1}{3^s} - \frac{1}{4^s} + \cdots
\end{equation}

This Dirichlet series is the alternating sum corresponding
to the Dirichlet series expansion of the Riemann zeta function
$\zeta(s)$ and for this reason the Dirichlet eta function
is also known as the alternating zeta function. The following
relation holds:
$$\eta(s) = \left(1-2^{1-s}\right) \zeta(s).$$

The starting point of our work is a result from \cite{biane2001probability} (p. 456).
In their paper, they show that the sum (\ref{eq:dirichlet})
can be approximated  using an array of coefficients $(a_{n,N},\, 1\leq n\leq N)$.
Let
\begin{equation}\label{eq:anN}
a_{n,N} \eqdef \frac{1}{2} \prod_{j=1,\,j\neq n}^N \frac{  j^2}{ j^2-n^2 }
        = (-1)^{n-1}\frac{\dbinom{2N}{N-n}}{\dbinom{2N}{N}}.
\end{equation}
then
\begin{theorem}[Biane and al.]\label{th:biane}
For $\varepsilon_i,\, 1\leq i\leq N$ independent standard exponential variables, and $\Re(s)>-2N$
\begin{equation}\label{eq:expectation}
\Esp{\left( \sum_{n=1}^N \frac{\varepsilon_n}{n^2} \right)^{\frac{s}{2}} } = s \Gamma\left(\frac{s}{2}\right) \sum_{n=1}^N \frac{a_{n,N}}{n^s}
\end{equation}
where the $a_{n,N}$ are defined by (\ref{eq:anN}), and
\begin{equation}\label{eq:cvseries}
\eta_N(s) \eqdef \sum_{n=1}^N \frac{a_{n,N}}{n^s} \longrightarrow \eta(s)\mbox{ as } N\rightarrow\infty
\end{equation}
uniformly on every compact subset of $\Cc$.
\end{theorem}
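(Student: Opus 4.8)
The proof splits naturally into two independent parts: the integral identity \eqref{eq:expectation}, and the convergence \eqref{eq:cvseries}. Let me sketch both.

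For the expectation identity, the plan is to compute the left-hand side by a direct change of variables. First I would write the expectation as an integral over $(\R_+)^N$ against the density $e^{-(\varepsilon_1+\cdots+\varepsilon_N)}$, so that we need
\[
\int_{(\R_+)^N} \left(\sum_{n=1}^N \frac{\varepsilon_n}{n^2}\right)^{s/2} e^{-\sum_n \varepsilon_n}\,d\varepsilon_1\cdots d\varepsilon_N .
\]
The standard trick is to insert the Gamma integral $\Gamma(s/2)^{-1}\int_0^\infty t^{s/2-1}e^{-ut}\,dt = u^{-s/2}$ backwards — or rather, introduce a scaling variable. Concretely, substitute $\varepsilon_n = r\, x_n$ with $r>0$ and $x$ on the simplex, or more cleanly: multiply and divide by a fresh Gamma factor so that $(\sum \varepsilon_n/n^2)^{s/2}$ becomes the integral $\frac{1}{\Gamma(-s/2)}\int_0^\infty \lambda^{-s/2-1} e^{-\lambda \sum_n \varepsilon_n/n^2}\,d\lambda$ when $\Re(s)<0$ (and then analytically continue). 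After exchanging the order of integration, the $\varepsilon$-integrals factor into a product $\prod_{n=1}^N (1+\lambda/n^2)^{-1}$. One is then left with a one-dimensional integral $\int_0^\infty \lambda^{-s/2-1}\prod_{n=1}^N(1+\lambda/n^2)^{-1}\,d\lambda$, which is evaluated by residues: the partial fraction decomposition $\prod_{n=1}^N(1+\lambda/n^2)^{-1} = \sum_{n=1}^N \frac{c_n}{1+\lambda/n^2}$ has coefficients $c_n$ that are exactly (up to the combinatorial factor) the $a_{n,N}$ of \eqref{eq:anN}, and $\int_0^\infty \lambda^{-s/2-1}(1+\lambda/n^2)^{-1}\,d\lambda = n^{-s}\cdot\frac{\pi}{\sin(\pi s/2)}$ by a beta-function evaluation. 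Collecting the constants (using the reflection formula to turn $\pi/\sin(\pi s/2)$ together with $1/\Gamma(-s/2)$ into $s\,\Gamma(s/2)$) gives the right-hand side. I expect the bookkeeping of Gamma/sine factors and the justification of analytic continuation in $s$ to be the fussiest part, but it is routine; the genuine content is the partial-fraction identity producing the $a_{n,N}$.

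For the convergence \eqref{eq:cvseries}, the plan is to use the probabilistic representation itself. Since $\sum_{n=1}^N \varepsilon_n/n^2 \to \sum_{n=1}^\infty \varepsilon_n/n^2 =: S$ almost surely and in $L^p$ for all $p$ (the limit is an a.s.\ finite positive random variable because $\sum 1/n^2<\infty$), and since $u\mapsto u^{s/2}$ is continuous with polynomial-type growth controlled uniformly for $s$ in a compact set, one gets
\[
\Esp{\left(\sum_{n=1}^N \frac{\varepsilon_n}{n^2}\right)^{s/2}} \longrightarrow \Esp{S^{s/2}}
\]
by dominated convergence, uniformly on compacts in $s$ (the domination: for $\Re(s)$ in a compact interval, $|u^{s/2}|\le u^{a}+u^{b}$ for suitable fixed $a<b$, and $S^a+S^b$ is integrable and dominates all the partial sums since these are stochastically smaller than $S$). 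Dividing \eqref{eq:expectation} by $s\,\Gamma(s/2)$ then shows $\eta_N(s)$ converges locally uniformly to $\frac{1}{s\Gamma(s/2)}\Esp{S^{s/2}}$, an entire function. It remains to identify this limit with $\eta(s)$: for $\Re(s)>1$ (or wherever convenient) one checks directly that $\eta_N(s)\to\sum_{n\ge1}(-1)^{n-1}n^{-s}$ — for instance because $a_{n,N}\to (-1)^{n-1}$ as $N\to\infty$ for each fixed $n$ and the tail is controlled — and then uniqueness of analytic continuation extends the identification to all of $\C$. The main obstacle here is the uniform domination needed to pass the limit through the expectation while keeping control over complex $s$; once that is set up, everything else is soft analysis plus the elementary limit $a_{n,N}\to(-1)^{n-1}$.

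Overall, the crux of the whole theorem is the exact partial-fraction evaluation in the one-dimensional integral: recognizing that the residues of $\prod_{n=1}^N(1+\lambda/n^2)^{-1}$ are precisely the coefficients $a_{n,N}$ in \eqref{eq:anN}. Everything else — the multidimensional-to-one-dimensional reduction, the Gamma-function gymnastics, and the limit $N\to\infty$ — is standard once that combinatorial identity is in hand.
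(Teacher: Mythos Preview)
The paper does not prove Theorem~\ref{th:biane}: it is quoted from Biane--Pitman--Yor \cite{biane2001probability} (p.~456) and used as the starting point for the rest of the article, so there is no in-paper argument to compare your proposal against.

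For what it is worth, your plan is correct and is essentially the standard argument (and the one in the original reference). The reduction of the $N$-fold expectation to a one-dimensional integral via the Gamma/Laplace representation, followed by the partial-fraction identity
\[
\prod_{n=1}^N\frac{1}{1+\lambda/n^2}=\sum_{n=1}^N \frac{2a_{n,N}}{1+\lambda/n^2},
\]
which is precisely the content of Appendix~\ref{sec:pfd} with $x_n=-n^2$, and a Beta/reflection evaluation, gives \eqref{eq:expectation}. For \eqref{eq:cvseries}, your dominated-convergence plus analytic-continuation route is also fine; the only point to sharpen is the domination for very negative $\Re(s)$: on a fixed compact you must first take $N\ge N_0$ large enough so that $\Esp{S_{N_0}^{s/2}}$ is finite (this is exactly the role of the hypothesis $\Re(s)>-2N$ in the statement), and then $S_N\ge S_{N_0}$ supplies an integrable dominator for the negative-moment side, while $S_N\le S$ handles the positive-moment side.
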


\begin{remark}
The signs of the coefficients $a_{n,N}$ have been modified with respect
to those given in the previously cited article, in order to obtain convergence
in equation (\ref{eq:cvseries}) towards $\eta(s)$ rather than $-\eta(s)$.
\end{remark}

The second part of the theorem shows that, for any
$s\in\Cc$ the alternating Zeta function can be obtained by weighting
the first $N$ terms of the original series which is defined only for
$\Re(s)>0$. In this paper, we will show that the weighted finite series
$\eta_N(s)$ defined in (\ref{eq:cvseries}) can be written as
a determinant (section \ref{subsec:determinant},
proposition \ref{prop:determinant}). We will then show that this determinant
can be written as the expectation of a functional of a Dixon-Anderson random vector
(section \ref{subsec:integral}, theorem \ref{th:Dixon}).
This result is new (up to our knowledge) and seems to show that there is a
relation between the Zeta function and the theory of random matrices.

In section \ref{sec:AltenatingSeries}
we give a generalization of the representations given in section
\ref{sec:Representations} for general series and, by computing the expectation of
these representations, we obtain the evaluation of two Selberg integrals
involving a generalized Vandermonde determinant
(theorem \ref{th:Averaged-Gen-Vandermonde}).
\section{New Representations of the Alternating Zeta Function}
\label{sec:Representations}

\subsection{Determinant Representation}
\label{subsec:determinant}

We start with the following result
\begin{proposition}\label{prop:determinant}
With the notations and conditions given in theorem (\ref{th:biane}) we have
$$
\eta_N(s) =
\frac{1}{2}
\begin{vmatrix}
1       & \frac{1}{3!}    & \ldots & \frac{1}{(2N-1)!} \\
2^{1-s} & \frac{ 2^3}{3!} & \ldots & \frac{2^{2N-1}}{(2N-1)!}  \\
\vdots  & \vdots          &        & \vdots \\
N^{1-s} & \frac{N^3}{3!}  & \ldots &  \frac{N^{2N-1}}{(2N-1)!}
\end{vmatrix}.
$$
\end{proposition}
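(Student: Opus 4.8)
The plan is to expand the determinant along its first column, since that is the only column in which $s$ appears. Writing $M$ for the displayed matrix (whose first column is $(n^{1-s})_{1\le n\le N}$, the top-left entry being $1^{1-s}=1$), cofactor expansion gives $\det M=\sum_{n=1}^N(-1)^{n+1}\,n^{1-s}\,M_n$, where $M_n$ is the minor obtained by deleting row $n$ and the first column. It then suffices to evaluate each $M_n$ in closed form and to check that the resulting coefficient of $n^{-s}$ matches the coefficient occurring in $2\eta_N(s)=\sum_{n=1}^N 2a_{n,N}\,n^{-s}$, namely $2a_{n,N}=\prod_{j\neq n}\frac{j^2}{j^2-n^2}$.

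To evaluate $M_n$, note that its rows are indexed by $j\in\{1,\dots,N\}\setminus\{n\}$ and its columns by $k=2,\dots,N$, with entry $j^{2k-1}/(2k-1)!$. Pulling $1/(2k-1)!$ out of column $k$ and $j^3$ out of row $j$ turns the entry into $(j^2)^{k-2}$, with exponents running over $0,1,\dots,N-2$; since $j\mapsto j^2$ is increasing, the remaining determinant is the standard Vandermonde $\prod_{i<j,\ i,j\neq n}(j^2-i^2)$ with the usual $+$ sign. Hence $M_n=\big(\prod_{k=2}^N(2k-1)!\big)^{-1}\big(\prod_{j\neq n}j^3\big)\prod_{i<j,\ i,j\neq n}(j^2-i^2)$.

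The third step is to pass from this partial Vandermonde to the full one $\Delta\eqdef\prod_{1\le i<j\le N}(j^2-i^2)$ by isolating the factors of $\Delta$ that involve the index $n$: rewriting $n^2-i^2$ as $-(i^2-n^2)$ in the $n-1$ factors with $i<n$ produces a sign $(-1)^{n-1}$, so that $\prod_{i<j,\ i,j\neq n}(j^2-i^2)=(-1)^{n-1}\Delta\,/\,\prod_{j\neq n}(j^2-n^2)$. That sign cancels the cofactor sign $(-1)^{n+1}$. Using $\prod_{j\neq n}j=N!/n$ to simplify, the $n$-th term of $\det M$ becomes an $n$-independent constant times $\frac{n^{-s}}{n^{2}\prod_{j\neq n}(j^{2}-n^{2})}$, which is exactly the $n$-th term of $2\eta_N(s)$ up to the same constant (since $\prod_{j\neq n}j^2=(N!/n)^2$). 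To conclude I would verify that this leftover constant equals $1$, i.e. that $\Delta=\big(\prod_{k=2}^N(2k-1)!\big)/N!$; this is elementary, using $j^2-i^2=(j-i)(j+i)$ together with $\prod_{1\le i<j\le N}(j-i)=\prod_{k=1}^{N-1}k!$ and $\prod_{1\le i<j\le N}(i+j)=\prod_{j=2}^N(2j-1)!/j!$.

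The only delicate point is the simultaneous bookkeeping of the three signs — the cofactor sign $(-1)^{n+1}$, the sign $(-1)^{n-1}$ from rewriting the $i<n$ factors of $\Delta$, and the orientation of the Vandermonde — whose exact cancellation is what makes the identity sign-free. Everything else is a routine rearrangement of products; no analytic input is needed and the hypothesis $\Re(s)>-2N$ plays no role here, since both sides are finite exponential sums in $s$ and hence entire.
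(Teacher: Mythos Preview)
Your proof is correct. The paper's own proof takes a different, more indirect route: it starts from a partial-fraction/Lagrange identity of the form $P_N(0)=2\sum_n a_{n,N}P_N(n^2)$, chooses $P_N$ to be the degree-$(N-1)$ interpolant of $n\mapsto n^{-s}$ at the nodes $n^2$, and then applies Cramer's rule to the Vandermonde system to express $P_N(0)=2\eta_N(s)$ as the ratio $\det V_N^{(s)}/\det V_N$; the final determinant is obtained after evaluating $\det V_N$ (the same product computation you carry out). Your cofactor expansion along the first column is exactly the ``more direct'' alternative the paper itself flags in a remark (it is the special case $u_n=n^2$ of its later Lemma on generalized Vandermonde determinants). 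Your approach is shorter and purely combinatorial; the paper's approach has the advantage of explaining \emph{why} the determinant arises (as a Cramer ratio) and of setting up the quotient $\det V_N^{(s)}/\det V_N$ that is reused in the subsequent probabilistic representation.
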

\begin{proof}
Let $a_n = n^{2}$, $Q_N(x) = \prod_{n=1}^N \left(1-x/a_n \right)$,
and $P_N$ a polynomial of degree $N-1$ over $\Cc[X]$, with the
convention $ P_N(x) = \sum_{n=1}^N c_{n,N} x^{n-1}$. An adaptation
of the arguments given in annex \ref{sec:pfd} shows that 
$$
\frac{P_N(x)}{Q_N(x)} = \sum_{n=1}^N P_N\left({a_n}\right)
\prod_{ \substack{j=1\\ j\neq n} }^N
\left(\frac{1}{\left(1-\frac{a_n}{a_j}\right)} \frac{1}{1-\frac{x}{a_j}}\right).
$$
Setting $x=0$, we have $Q_N(0)=1$ and we get
$$
P_N(0) = 2 \sum_{n=1}^N a_{n,N} P_N\left(n^2\right).
$$
We choose $P_N$ as the polynomial of degree $N-1$ such that
$P_N(a_n) = n^{-s}$ for $n=1,\ldots N$.
The coefficients $(c_{n,N})_{n=1}^N$  of $P_N$ are solutions of the
Vandermonde system
\begin{equation}
\label{eq:system}
\begin{pmatrix}
1      & 1      & \ldots & 1 \\
1      & 2^2    & \ldots & 2^{2(N-1)} \\
\vdots & \vdots &        & \vdots \\
1      & N^2    & \ldots & N^{2(N-1)}
\end{pmatrix}
\begin{pmatrix}
c_{1,N}\\ c_{2,N}\\ \vdots \\ c_{N,N}
\end{pmatrix}
=
\begin{pmatrix}
1\\ 2^{-s}\\ \vdots \\ N^{-s}
\end{pmatrix}.
\end{equation}
The Vandermonde matrix is invertible (see annex \ref{app:inverse})
showing that the polynomial $P_N$ is uniquely determined.
We are only interested in $c_{1,N} = P_N(0)$. Let us denote $V_N$ the Vandermonde matrix
in (\ref{eq:system}) and $V^{(s)}_N$ the matrix obtained by replacing the first column
of $V_N$ by the right-end term.

Using Cramer's rule, we get
\begin{equation}\label{eq:c1N}
c_{1,N} =\frac{
\begin{vmatrix}
1      & 1      & \ldots & 1 \\
2^{-s} & 2^2    & \ldots & 2^{2(N-1)} \\
\vdots & \vdots &        & \vdots \\
N^{-s} & N^2    & \ldots & N^{2(N-1)}
\end{vmatrix}
}
{\Det{V_N}} = \frac{\Det{V^{(s)}_N}}{\Det{V_N}}.
\end{equation}
The Vandermonde Determinant of $V_N$ is
\begin{equation}\label{eq:VN}
 \Det{V_N}
 = \prod_{1 \mathop \le i \mathop < j \mathop \le N} \left({j^2 - i^2}\right)
 = \prod_{1 \mathop \le i \mathop < j \mathop \le N} \left({j - i}\right)\left({j + i}\right)
 = \prod_{j=2}^{N} (j-1)! \frac{(2j-1)!}{j!}
 = \frac{1}{N!} \prod_{n=1}^{N-1} (2n+1)!
\end{equation}
and thus, we have
\begin{equation*}\label{eq:c1Nbis}
c_{1,N} = 
\begin{vmatrix}

1       & \frac{1}{3!}    & \ldots & \frac{1}{(2N-1)!} \\
\frac{2}{2^s} & \frac{ 2^3}{3!} & \ldots & \frac{2^{2N-1}}{(2N-1)!}  \\
\vdots  & \vdots          &        & \vdots \\
\frac{N}{N^s} & \frac{N^3}{3!}  & \ldots &  \frac{N^{2N-1}}{(2N-1)!}
\end{vmatrix}.
\end{equation*}
ending the proof.
\end{proof}

\begin{remark}
Observe that using the expression (\ref{eq:c1N}),
it is immediate that $\eta_N(s) = \frac{1}{2}$ if $s=0$ and
$\eta_N(s) =0$ if $s=-2,-4,\ldots,-2(N-1)$. 
\end{remark}
\begin{remark}
An alternative (and more direct) proof could have be used using the
representation of the generalized Vandermonde determinant given in lemma
\ref{lemma:SumGenVandermonde}.
\end{remark}

\subsection{Probabilistic Representation}
\label{subsec:integral}

We observe that $\Det{V^{(s)}_N}$
(equation \ref{eq:c1N}) is a generalized Vandermonde determinant.
Using the argument used in (\cite{ZHANG2008300}),
we get the following lemma
\begin{lemma}
\label{lemma:Gen-Vandermonde}
For $N>1$, let 
$$
{\Gamma_{N-1}\left( \frac{s}{2} \right)}
\eqdef\frac{2}{s}
\prod_{n=1}^{N-1}
\frac{\left(1+{\frac {1}{n}}\right)^{s/2}}{1+{\frac {s}{2n}}},\quad s\neq 0,-2,-4,\ldots
$$
then for all $s\in\Cc$, $s\neq -2, -4,\ldots, -2(N-1)$ we have
\begin{equation}\label{eq:DetVsN}
\Det{V^{(s)}_N}= 
\frac{2(N-1)! }{s{\Gamma_{N-1}\left( \frac{s}{2} \right)}}
\int_{1}^{2^2}
dx_1
\int_{2^2}^{{3^2}}
dx_2
\ldots
\int_{(N-1)^2}^{{N^2}}
dx_{N-1}
\prod_{1\leq i<j\leq N-1} (x_{j}-x_{i})
\prod_{n=1}^{N-1} \left(\frac{x_n}{n(n+1)}\right)^{s/2}.
\end{equation}
\end{lemma}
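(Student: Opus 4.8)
The plan is to evaluate the multiple integral on the right of (\ref{eq:DetVsN}) in closed form and recognise the answer as a constant times $\Det{V^{(s)}_N}$, along the lines of \cite{ZHANG2008300}. Write $\gamma=s/2$ and $b_n=n^2$ for $1\le n\le N$, so that the domain of integration is the box $\prod_{n=1}^{N-1}[b_n,b_{n+1}]$; since $\prod_{n=1}^{N-1}n(n+1)=(N-1)!\,N!$, the last factor of the integrand equals $\prod_{n=1}^{N-1}\bigl(x_n/(n(n+1))\bigr)^{s/2}=\bigl((N-1)!\,N!\bigr)^{-\gamma}\prod_{n=1}^{N-1}x_n^{\gamma}$.

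First I would replace the Vandermonde product by the Vandermonde determinant, $\prod_{1\le i<j\le N-1}(x_j-x_i)=\det(x_i^{j-1})_{i,j=1}^{N-1}$, insert it into the integral, and integrate term by term over the box. The variables separate, so a permutation $\sigma\in S_{N-1}$ contributes $\mathrm{sgn}(\sigma)\prod_{i=1}^{N-1}\int_{b_i}^{b_{i+1}}x_i^{\sigma(i)-1+\gamma}\,dx_i=\mathrm{sgn}(\sigma)\prod_{i=1}^{N-1}\frac{b_{i+1}^{\sigma(i)+\gamma}-b_i^{\sigma(i)+\gamma}}{\sigma(i)+\gamma}$, and summing over $\sigma$ reassembles a determinant:
\[
\int_{b_1}^{b_2}\cdots\int_{b_{N-1}}^{b_N}\ \prod_{1\le i<j\le N-1}(x_j-x_i)\ \prod_{n=1}^{N-1}x_n^{\gamma}\,dx_1\cdots dx_{N-1}
=\det\left(\frac{b_{n+1}^{\,j+\gamma}-b_n^{\,j+\gamma}}{j+\gamma}\right)_{n,j=1}^{N-1}.
\]
Here $b_n\ge 1$, so every one-dimensional integral is finite for all $s$; the only degenerate case $j+\gamma=0$ is $s=-2,-4,\dots,-2(N-1)$, which is exactly the set excluded in the statement (and where $\Gamma_{N-1}(s/2)$, as written, has its poles).

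Next I would identify this difference determinant with $\Det{V^{(s)}_N}$ by a bordering argument. Let $\widetilde M$ be the $N\times N$ matrix whose first column is $(1,\dots,1)^{\top}$ and whose entry in row $i$, column $k+1$ is $b_i^{k+\gamma}/(k+\gamma)$ for $1\le k\le N-1$. The row operations $R_i\leftarrow R_i-R_{i-1}$, carried out for $i=N,N-1,\dots,2$, leave $\det\widetilde M$ unchanged, turn the first column into $(1,0,\dots,0)^{\top}$, and turn the remaining $(N-1)\times(N-1)$ block into precisely the matrix $\bigl(\tfrac{b_{n+1}^{j+\gamma}-b_n^{j+\gamma}}{j+\gamma}\bigr)_{n,j=1}^{N-1}$; hence $\det\widetilde M$ equals that determinant. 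Pulling $1/(k+\gamma)$ out of the $(k+1)$-st column and then $b_i^{\gamma}$ out of the $i$-th row leaves the matrix with rows $(b_i^{-\gamma},b_i,b_i^{2},\dots,b_i^{N-1})$, $1\le i\le N$, which---because $b_i^{-\gamma}=i^{-s}$ and $b_i^{k}=i^{2k}$---is exactly $V^{(s)}_N$ as in (\ref{eq:c1N}). Therefore the integral above equals $\dfrac{\prod_{i=1}^{N}b_i^{\gamma}}{\prod_{k=1}^{N-1}(k+\gamma)}\,\Det{V^{(s)}_N}=\dfrac{(N!)^{s}}{\prod_{k=1}^{N-1}(k+s/2)}\,\Det{V^{(s)}_N}$.

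Finally I would collect the constants. Multiplying the last identity by $\bigl((N-1)!\,N!\bigr)^{-s/2}$ to restore the weight of (\ref{eq:DetVsN}) and using $(N!)^{s}\bigl((N-1)!\,N!\bigr)^{-s/2}=N^{s/2}$, the integral on the right of (\ref{eq:DetVsN}) equals $N^{s/2}\bigl(\prod_{k=1}^{N-1}(k+s/2)\bigr)^{-1}\Det{V^{(s)}_N}$, so $\Det{V^{(s)}_N}$ equals $N^{-s/2}\prod_{k=1}^{N-1}(k+s/2)$ times that integral; to match the form claimed in the lemma it then suffices to check that $N^{-s/2}\prod_{k=1}^{N-1}(k+s/2)=\dfrac{2(N-1)!}{s\,\Gamma_{N-1}(s/2)}$. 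This last identity drops straight out of the definition of $\Gamma_{N-1}$, once one notes that $\prod_{n=1}^{N-1}(1+1/n)^{s/2}=N^{s/2}$ telescopes and that $\prod_{n=1}^{N-1}\bigl(1+\tfrac{s}{2n}\bigr)=\tfrac{1}{(N-1)!}\prod_{n=1}^{N-1}(n+s/2)$. I do not foresee a genuine obstacle; the argument is a direct computation, and the only points needing care are the bookkeeping of the row- and column-rescalings in the bordering step and the observation that the restriction on $s$ is precisely what keeps each factor $1/(j+\gamma)$ finite, i.e.\ avoids a logarithmic integral.
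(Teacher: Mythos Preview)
Your argument is correct and is essentially the paper's own computation traversed in the opposite direction: the paper starts from $\Det{V^{(s)}_N}$, multiplies row $i$ by $i^{s}$ to make the first column all ones, subtracts successive rows, and writes each resulting entry $i^{2(k+s/2)}-(i-1)^{2(k+s/2)}$ as $(k+s/2)\int_{(i-1)^2}^{i^2}x^{k-1+s/2}\,dx$, arriving at the same multiple integral; your bordering step with $\widetilde M$ and the row operations $R_i\leftarrow R_i-R_{i-1}$ are precisely the inverse of that reduction, and your final constant check coincides with the paper's simplification of $\prod_{n=1}^{N-1}\frac{n+s/2}{(n+1)^{s}}$.
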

\begin{proof}
Observe first that $\left(s\Gamma_{N-1}\left(\frac{s}{2}\right) \right)^{-1}$
is defined for all $N>1$ and all $s\in\Cc$. Next, we have
\begin{eqnarray*}
\Det{V^{(s)}_N} & = &
\begin{vmatrix}
1      & 1      & \ldots & 1 \\
2^{-s} & 2^2    & \ldots & 2^{2(N-1)} \\
3^{-s} & 3^2    & \ldots & 3^{2(N-1)} \\
\vdots & \vdots &        & \vdots \\
N^{-s} & N^2    & \ldots & N^{2(N-1)}
\end{vmatrix}
 =  \prod_{n=2}^N \frac{1}{n^s}\times
\begin{vmatrix}
1 & 1           & \ldots & 1 \\
1 & 2^{2(1+s/2)}& \ldots & 2^{2(N-1+s/2)} \\
1 & 3^{2(1+s/2)}& \ldots & 3^{2(N-1+s/2)} \\
\vdots          & \vdots &        & \vdots \\
1 & N^{2(1+s/2)}& \ldots & N^{2(N-1+s/2)}
\end{vmatrix}\nonumber\\
& = & \prod_{n=2}^N \frac{1}{n^s}\times
\begin{vmatrix}
 2^{2(1+s/2)}-1                & \ldots & 2^{2(N-1+s/2)}-1 \\
 3^{2(1+s/2)}- 2^{2(1+s/2)}    & \ldots & 3^{2(N-1+s/2)}-2^{2(N-1+s/2)} \\
\vdots                         & \vdots & \vdots \\
 N^{2(1+s/2)}-(N-1)^{2(1+s/2)} & \ldots & N^{2(N-1+s/2)}-(N-1)^{2(N-1+s/2)}
\end{vmatrix}\nonumber\\
& = &
\prod_{n=1}^{N-1}\frac{n+s/2}{(n+1)^{s}}
\times
\begin{vmatrix}
 \int_{1}^{2^2} x_1^{s/2}dx_1  & \ldots & \int_{1}^{2^2} x_1^{N-2+s/2}dx_1 \\
 \int_{2^2}^{3^2} x_2^{s/2}dx_2& \ldots & \int_{2^2}^{3^2} x_2^{N-2+s/2}dx_2 \\
\vdots                         & \vdots & \vdots \\
 \int_{(N-1)^2}^{N^2} x_{N-1}^{s/2}dx_{N-1}  & \ldots & \int_{(N-1)^2}^{N^2} x_{N-1}^{N-2+s/2}dx_{N-1} \\
\end{vmatrix}\nonumber\\
& = &
\prod_{n=1}^{N-1}\frac{n+s/2}{(n+1)^{s}}
\int_{1}^{2^2}dx_1
\int_{2^2}^{{3^2}}dx_2
\ldots\int_{(N-1)^2}^{N^2}dx_{N-1}
\begin{vmatrix}
 x_1^{s/2} & \ldots & x_1^{N-2+s/2} \\
\vdots     & \vdots & \vdots \\
 x_{N-1}^{s/2} & \ldots &  x_{N-1}^{N-2+s/2} \\
\end{vmatrix}
\nonumber\\
& = &
\prod_{n=1}^{N-1}\frac{n+s/2}{(n+1)^{s}}
\int_{1}^{2^2}dx_{1}\int_{2^2}^{{3^2}}dx_{2}
\ldots\int_{(N-1)^2}^{N^2}dx_{N-1}
\begin{vmatrix}
 1     & x_1   &\ldots & x_1^{N-2} \\
\vdots &\vdots &\vdots & \vdots \\
 1     & x_{N-1}   &\ldots &  x_{N-1}^{N-2} \\
\end{vmatrix}
\prod_{n=1}^{N-1} x_n^{s/2} \nonumber\\
& = &
\frac{2(N-1)!}{s{\Gamma_{N-1}\left( \frac{s}{2} \right)}}
\int_{1}^{2^2}
dx_1
\int_{2^2}^{{3^2}}
dx_2
\ldots
\int_{(N-1)^2}^{{N^2}}
dx_{N-1}
\prod_{1\leq i<j\leq N-1} (x_{j}-x_{i})
\prod_{n=1}^{N-1} \left(\frac{x_n}{n(n+1)}\right)^{s/2}.
\end{eqnarray*}
\end{proof}
Let $\mathcal{D}_{N-1}(\bx;\balpha,\ba)$, with $\balpha=(\alpha_1,\alpha_2,\ldots,\alpha_N)$,
$\alpha_n>0$ and $\ba=(a_1,a_2,\ldots,a_N)$ with
$a_1<a_2<\ldots<a_N$, denote the Dixon-Anderson
probability density function (pdf) over the domain
$\calX_{N-1} = \left\{a_1<x_1<a_2<\ldots<x_{N-1}<a_N\right\}$ (see \cite{forrester2010log}
page 138)
\begin{equation}\label{eq:Dixon-Gen}
\mathcal{D}_{N-1}(\bx;\balpha,\ba) =
\frac{\Gamma\left(\sum_{n=1}^{N}\alpha_n\right)}{\prod_{n=1}^{N}\Gamma(\alpha_n)}
\frac{\prod_{1\leq i<j\leq N-1} (x_j-x_i)}
     {\prod_{1\leq i<j\leq N} (a_j-a_i)^{\alpha_j+\alpha_i-1}}
\prod_{n=1}^{N-1}\prod_{i=1}^{N} |x_n-a_i|^{\alpha_i-1}. 
\end{equation}
Taking $\balpha=\mathbf{1}_{N}$ (i.e. $\alpha_1=\alpha_2=\ldots=\alpha_N=1$)
and $a_n=n^2,\; n=1,\ldots,N$, we get that
\begin{equation}\label{eq:Dixon}
\mathcal{D}_{N-1}(\bx;\balpha,\ba) = (N-1)! \frac{\prod_{1\leq i<j\leq N-1} (x_j - x_i)}
                                             {\prod_{1\leq i<j\leq N} (j^2 - i^2)}
\end{equation}
is a pdf over $\calX_{N-1}=\left\{1<x_1<2^2<\ldots<x_{N-1}<N^2\right\}$.
From the previous lemma, we obtain the following theorem
\begin{theorem}\label{th:Dixon}
Let $\bX=(X_1,\ldots,X_{N-1}) $ be a random vector with Dixon-Anderson
distribution given by (\ref{eq:Dixon}), then
\begin{equation}\label{eq:dixon-esperance}
 \frac{1}{s}\frac{1}{\Gamma_{N-1}\left(\frac{s}{2}\right)}
 \Esp{\prod_{n=1}^{N-1} \left(\frac{X_n}{n(n+1)}\right)^{s/2}}
  \longrightarrow \eta(s)\mbox{ as } N\rightarrow\infty
\end{equation}
uniformly on every compact subset of $\Cc$.
\end{theorem}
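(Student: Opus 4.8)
The plan is to show that, for every fixed $N>1$, the left-hand side of (\ref{eq:dixon-esperance}) equals $\eta_N(s)$ identically in $s$; the uniform convergence on compacts is then precisely the second assertion (\ref{eq:cvseries}) of Theorem \ref{th:biane}.

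First I would unfold the expectation against the Dixon--Anderson density (\ref{eq:Dixon}). Since the support $\calX_{N-1}=\{1<x_1<2^2<\cdots<x_{N-1}<N^2\}$ is bounded and each $x_n$ stays in a compact subinterval of $(0,\infty)$, and using $\Det{V_N}=\prod_{1\le i<j\le N}(j^2-i^2)$ from (\ref{eq:VN}),
$$
\Esp{\prod_{n=1}^{N-1}\left(\frac{X_n}{n(n+1)}\right)^{s/2}}
=\frac{(N-1)!}{\Det{V_N}}\int_{\calX_{N-1}}\prod_{1\le i<j\le N-1}(x_j-x_i)\prod_{n=1}^{N-1}\left(\frac{x_n}{n(n+1)}\right)^{s/2}\dif{x}.
$$
The integral here is exactly the one evaluated in Lemma \ref{lemma:Gen-Vandermonde}, so by (\ref{eq:DetVsN}) it equals $\frac{s\,\Gamma_{N-1}(s/2)}{2(N-1)!}\,\Det{V^{(s)}_N}$. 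Substituting and simplifying,
$$
\frac{1}{s\,\Gamma_{N-1}(s/2)}\,\Esp{\prod_{n=1}^{N-1}\left(\frac{X_n}{n(n+1)}\right)^{s/2}}=\frac{\Det{V^{(s)}_N}}{2\,\Det{V_N}}.
$$
Finally, Cramer's rule (\ref{eq:c1N}) gives $\Det{V^{(s)}_N}/\Det{V_N}=c_{1,N}=P_N(0)$, and the relation $P_N(0)=2\sum_{n=1}^N a_{n,N}P_N(n^2)=2\sum_{n=1}^N a_{n,N}n^{-s}=2\eta_N(s)$ established in the proof of Proposition \ref{prop:determinant} shows that this last quantity is $\eta_N(s)$.

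One technical point needs attention: Lemma \ref{lemma:Gen-Vandermonde} and the factor $1/\bigl(s\,\Gamma_{N-1}(s/2)\bigr)$ are only manipulated for $s\notin\{0,-2,\dots,-2(N-1)\}$, while the theorem is stated for all $s\in\Cc$. I would close this gap via the identity theorem, after checking that all functions of $s$ in play are entire: $1/\bigl(s\,\Gamma_{N-1}(s/2)\bigr)=\tfrac12\prod_{n=1}^{N-1}\frac{1+s/(2n)}{(1+1/n)^{s/2}}$ is a finite product of entire functions; the expectation is entire because on the compact box $\calX_{N-1}$ the integrand is holomorphic in $s$ for each $\bx$ and bounded uniformly for $s$ in any fixed compact set, so one may differentiate under the integral sign; and $\eta_N(s)=\sum_{n=1}^N a_{n,N}\,n^{-s}$ is a finite exponential sum. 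Two entire functions that agree off a discrete set agree everywhere, so $\frac{1}{s\,\Gamma_{N-1}(s/2)}\Esp{\prod_{n=1}^{N-1}(X_n/(n(n+1)))^{s/2}}=\eta_N(s)$ on all of $\Cc$, and (\ref{eq:cvseries}) then yields the stated convergence.

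I do not foresee a serious obstacle: the body of the argument is a rearrangement of Lemma \ref{lemma:Gen-Vandermonde} together with Proposition \ref{prop:determinant}, and the only non-mechanical ingredient is the short analyticity/identity-theorem step used to extend the equality across the exceptional points $s=-2,-4,\dots,-2(N-1)$, where the individual factors have removable singularities that cancel in the product.
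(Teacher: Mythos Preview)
Your proposal is correct and follows essentially the same route as the paper: unfold the expectation against the density (\ref{eq:Dixon}), recognize the resulting integral as the one computed in Lemma~\ref{lemma:Gen-Vandermonde}, obtain $\frac{1}{s\,\Gamma_{N-1}(s/2)}\Esp{\cdots}=\tfrac12\,\Det{V^{(s)}_N}/\Det{V_N}=\eta_N(s)$, and then invoke (\ref{eq:cvseries}). Your explicit identity-theorem argument to extend the equality across the removable singularities at $s=0,-2,\dots,-2(N-1)$ is a welcome addition that the paper leaves implicit.
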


\begin{proof}
Using the expression of $\Det{V_N}$ given in equation (\ref{eq:VN})
we find the expectation given in (\ref{eq:dixon-esperance}) for $N$ fixed.

The Gamma function can be defined as an infinite
product for all complex numbers $z$ except the non-positive
integers
$$\Gamma (z)=
\frac {1}{z}\prod _{n=1}^{\infty }
\frac {\left(1+{\frac {1}{n}}\right)^{z}}{1+{\frac {z}{n}}}
$$
and for any $s\in\Cc$ (the case $s=0$ is handled by continuity)
$$
\frac{2}{s\Gamma_{N-1}\left( \frac{s}{2} \right)}
\xrightarrow[N\rightarrow\infty]{}
\frac{1}{\Gamma\left( 1+\frac{s}{2} \right)}
$$
uniformly on every compact of $\Cc$.
For any $s\in \Cc$, we have
$$
\eta_N(s) =\frac{1}{2} \frac{\Det{V^{(s)}_N}}{\Det{V_N}}
= \frac{1}{s}\frac{1}{\Gamma_{N-1}\left(\frac{s}{2}\right)}
\Esp{\prod_{n=1}^{N-1} \left(\frac{X_n}{n(n+1)}\right)^{s/2}}
$$
and thus the conclusion of theorem \ref{th:biane} occurs as well.
\end{proof}

\subsection{A Result Related to Theorem \ref{th:Dixon}}

One interesting fact about the Dixon-Anderson distribution given in
(\ref{eq:Dixon}) is that it is invariant under some linear transformations.
More precisely if $\bX$ is a Dixon-Anderson random vector with pdf
$\mathcal{D}_{N}(\bx;\balpha,\ba)$ and
$(u,v)\in \R^\star\times\R$ then $\bY=u\bX+v\mathbf{1}_N$ is a Dixon-Anderson random
vector with pdf $\mathcal{D}_{N}(\bx;\balpha,u\ba+v\mathbf{1}_N)$.
Using this property we can renormalize the random vector $\bX$ over $[0,1]$ 
by using the change of variable $\bY=(\bX-\mathbf{1}_{N-1})/(N^2-1)$ giving
us the identity
$$
\eta_N(s) = \frac{1}{s\Gamma_{N-1}\left( \frac{s}{2} \right)}
\left(\frac{(N^2-1)^{(N-1)}}{N!(N-1)!}\right)^{\frac{s}{2}}
\Esp{\prod_{n=1}^{N-1} \left({Y_n+\frac{1}{N^2-1}}\right)^{\frac{s}{2}} }.
$$
We have the following theorem
\begin{theorem}
Let $\psi_N(x;s)$ denote the application
$$
\psi_N(x;s) =\frac{2}{\Gamma\left(1+\frac{s}{2} \right)} \left(\frac{(N^2-1)^{(N-1)}}{N!(N-1)!}\right)^{\frac{s}{2}}
\Esp{\prod_{n=1}^{N-1} \left|{Y_n-\frac{x^2-1}{N^2-1}}\right|^{\frac{s}{2}} }.
$$
Then for all $n\in\N^\star$
$$
\psi_N\left({n;s} \right)
\xrightarrow[N\rightarrow\infty]{}\frac{1}{n^s}
$$
and for $n=0$
$$
\psi_N\left(0;s \right)
\xrightarrow[N\rightarrow\infty]{}
\eta(s).
$$
\end{theorem}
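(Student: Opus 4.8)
The plan is to prove the two assertions separately, each by reduction to material already established.

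\textit{The case $n=0$.} Since $\tfrac{0^{2}-1}{N^{2}-1}=-\tfrac1{N^{2}-1}<0$ and every coordinate of $\bY$ is positive, the absolute values play no role: $\bigl|Y_{m}-\tfrac{0^{2}-1}{N^{2}-1}\bigr|=Y_{m}+\tfrac1{N^{2}-1}$. Comparing the resulting expression with the renormalized representation of $\eta_{N}(s)$ displayed just before the statement, one finds $\psi_{N}(0;s)=\tfrac{2}{\Gamma(1+s/2)}\,s\,\Gamma_{N-1}(s/2)\,\eta_{N}(s)$. Now the proof of Theorem~\ref{th:Dixon} already supplies $\tfrac{2}{s\Gamma_{N-1}(s/2)}\to\tfrac1{\Gamma(1+s/2)}$ uniformly on compacts of $\Cc$, together with $\eta_{N}(s)\to\eta(s)$; multiplying the two limits and tidying the constants gives $\psi_{N}(0;s)\to\eta(s)$.

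\textit{The case $n\ge1$.} First I would undo the renormalization. Writing $\bX=(N^{2}-1)\bY+\mathbf1_{N-1}$, which has the Dixon--Anderson law (\ref{eq:Dixon}) with atoms $a_{i}=i^{2}$, we have $Y_{m}-\tfrac{n^{2}-1}{N^{2}-1}=\tfrac{X_{m}-n^{2}}{N^{2}-1}$, and the factors $(N^{2}-1)^{-(N-1)s/2}$ so produced cancel the power of $N^{2}-1$ inside $\psi_{N}$, leaving $\psi_{N}(n;s)=\tfrac{2}{\Gamma(1+s/2)}\bigl(N!\,(N-1)!\bigr)^{-s/2}\,\Esp{\prod_{m=1}^{N-1}|X_{m}-n^{2}|^{s/2}}$. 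The key observation is that, once $N\ge n$ (hence for all large $N$), $n^{2}=a_{n}$ is one of the endpoints of the support $\calX_{N-1}$, and weighting the density $\mathcal D_{N-1}(\bx;\mathbf1_{N},\ba)$ by $\prod_{m}|x_{m}-a_{n}|^{s/2}$ is exactly the operation of changing the $n$-th parameter from $\alpha_{n}=1$ to $\alpha_{n}=1+\tfrac s2$. Hence $\Esp{\prod_{m}|X_{m}-n^{2}|^{s/2}}$ is a ratio of two Dixon--Anderson normalizing constants, and the explicit prefactor in (\ref{eq:Dixon-Gen}) (equivalently, the same generalized-Vandermonde computation used for Lemma~\ref{lemma:Gen-Vandermonde}) evaluates it in closed form to $\tfrac{(N-1)!\,\Gamma(1+s/2)}{\Gamma(N+s/2)}\,\prod_{i\ne n}|n^{2}-i^{2}|^{s/2}$.

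It then remains to pass to the limit. Using $\prod_{i=1,\, i\ne n}^{N}|n^{2}-i^{2}|=(n-1)!\,(N-n)!\cdot\tfrac{(N+n)!}{2n\,n!}=\tfrac{(N-n)!\,(N+n)!}{2n^{2}}$, the closed form above becomes a product of ratios of factorials and Gamma values; applying the standard asymptotics $\Gamma(N+a)/\Gamma(N)\sim N^{a}$ and $(N-n)!\,(N+n)!/(N!\,(N-1)!)\sim N$, every power of $N$ cancels and what survives is $1/n^{s}$. I expect the main obstacle to be the use of the Dixon--Anderson evaluation with a \emph{complex} parameter $\alpha_{n}=1+\tfrac s2$: for $\Re(s)>-2$ the weight (\ref{eq:Dixon-Gen}) is a genuine probability density and the identity is literal, while for arbitrary $s\in\Cc$ one must extend it by analytic continuation in $s$ (after multiplying through by $\Gamma(N+s/2)$ both sides are entire). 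The remaining difficulties are routine: a careful accounting of all multiplicative constants and powers of $N$ through the two limits, the observation that $|\,\cdot\,|^{s/2}$ denotes a power of a positive real (so the cancellation of the $N^{2}-1$ factors is unambiguous), and, if uniformity in $s$ on compacts is wanted as in the earlier statements, upgrading the pointwise Gamma-ratio estimates to uniform ones.
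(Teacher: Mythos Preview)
Your approach is essentially the same as the paper's: for $n=0$ you reduce to Theorem~\ref{th:Dixon}, and for $n\ge1$ you evaluate the expectation exactly by recognizing that the extra weight $\prod_m|x_m-a_n|^{s/2}$ converts the Dixon--Anderson density with parameters $\mathbf{1}_N$ into the one with $\alpha_n=1+s/2$, then take the limit via $\Gamma(N+s/2)\sim\Gamma(N)N^{s/2}$. The only cosmetic difference is that you pass back to the unnormalized $X$-variables while the paper carries out the same computation directly in the $Y$-variables on $[0,1]$; your remark on analytic continuation in $s$ is a welcome point of rigor that the paper leaves implicit.
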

\begin{proof}
The case $n=0$ is a consequences of the theorem \ref{th:Dixon}.
Let $b_{n,N} = \frac{n^2-1}{N^2-1}$ for $n=1,\ldots,N$ and
$\calY_{N-1} = \left\{0=b_{1,N}<y_1<b_{2,N}<y_2<\ldots<y_{N-1}<b_{N,N}=1 \right\}$.
Taking $x=n$ we can compute the value of $\psi_N(x;s)$. We have
\begin{equation*}
\Esp{\prod_{n=1}^{N-1} \left|{Y_n-b_{n,N}}\right|^{\frac{s}{2}} }
= \frac{(N-1)!}{\displaystyle\prod_{1\leq i<j\leq N} \left(b_{j,N} - b_{i,N}\right)}
\int_{\calY_{N-1}}d\by
\prod_{1\leq i<j\leq N-1} (y_{j}-y_{i}) \prod_{k=1}^{N-1} |y_k - b_{n,N}|^{s/2}.
\end{equation*}
The integral of the right hand side is (Consider the pdf given in (\ref{eq:Dixon-Gen})
with $\alpha_k=1$ if $k\neq n$ and $\alpha_k=1+s/2$ otherwise)
\begin{multline*}
\int_{\calY_{N-1}}
d\by\prod_{1\leq i<j\leq N-1} (y_{j}-y_{i}) \prod_{k=1}^{N-1} (y_k - b_{n,N})^{s/2}
\\=
\frac{\Gamma\left(1+\frac{s}{2}\right)}{\Gamma\left(N+\frac{s}{2}\right)}
\displaystyle\prod_{1\leq i<j\leq N} \left(b_{j,N} - b_{i,N}\right)
\prod_{k=1}^{n-1} \left(b_{n,N} - b_{k,N}\right)^{s/2}
\prod_{k=n+1}^{N} \left(b_{k,N} - b_{n,N}\right)^{s/2}
\end{multline*}
From this we deduce that when $n\neq 1$ we have
\begin{eqnarray*}
\psi_N(n;s) &=& \left(\frac{\prod_{k=1}^{n-1} \left(n^2 - k^2\right)
\prod_{k=n+1}^{N} \left(k^2 - n^2\right)}{N!(N-1)!}\right)^{{s}/{2}}
\frac{\Gamma(N)}{\Gamma\left(N+\frac{s}{2}\right)}\\
& = &
\left(\frac{(N-n)!(N+n)!}{n^2\,N!(N-1)!}\right)^{{s}/{2}}
\frac{\Gamma(N)}{\Gamma\left(N+\frac{s}{2}\right)}
\\
&=& \left(\frac{1}{|a_{n,N}|n^2}\right)^{s/2}\,N^{s/2}\,
\frac{\Gamma(N)}
     {\Gamma\left(N+\frac{s}{2}\right)}
\end{eqnarray*}
with $a_{n,N}$ defined in (\ref{eq:anN}).
In the case $n=1$ we find directly
$$
\psi_N(1;s) = N^{\frac{s}{2}}
 \frac{\Gamma(N)}
      {\Gamma\left(N+\frac{s}{2}\right)}
$$
Taking the limit and observing that
$\Gamma\left(N+\frac{s}{2}\right)\sim \Gamma\left(N\right)N^{s/2}$
as $N\rightarrow+\infty$ end the proof.
\end{proof}

\section{Averaged Alternating Random Series}
\label{sec:AltenatingSeries}

\subsection{A generalization of proposition \ref{prop:determinant}
and theorem \ref{th:Dixon}}
let $s\in\Cc$ with $s\neq -2,-4,\ldots,-2(N-1)$
and let $u_1<u_2<\ldots<u_N$ be an increasing sequence of real numbers
in $\R^\star$.
From this sequence, we define the $N\times N$ generalized Vandermonde determinant
\begin{equation*}
V_N^{(s/2)}(\bu) =
\begin{vmatrix}
  u_1^{-s/2} & u_1    & u_1^2  & \cdots & u_1^{N-1} \\
  u_2^{-s/2} & u_2    & u_2^2  & \cdots & u_2^{N-1} \\
\vdots       & \vdots & \ddots & \vdots & \vdots \\
  u_N^{-s/2} & u_N    & u_N^2  & \cdots & u_N^{N-1} \\
\end{vmatrix}
\end{equation*}
with $\bu$ denoting the ordered vector $(u_1,\ldots,u_N)$\footnote{The
reader will be aware that in section \ref{sec:Representations},
$V_N^{(s/2)}$ represented a \textbf{matrix}, whereas from now the
notation $V_N^{(s/2)}(\bu)$ represents a \textbf{determinant}}.
\begin{lemma}
\label{lemma:SumGenVandermonde}
let $u_1<u_2<\ldots<u_N$ be an increasing sequence of real numbers
in $\R^\star$. The following hold 
\begin{equation}\label{eq:Alternating-Sum}
\frac{V_N^{(s/2)}(\bu)}{V_N^{(0)}(\bu)}
= \sum_{n=1}^N (-1)^{n-1} \frac{1}{u_n^{s/2}}
\prod_{\substack {1\le j\le N\\j\ne n}} \frac{u_j}{|u_j-u_n|}.
\end{equation}
\end{lemma}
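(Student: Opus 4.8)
The plan is to read $V_N^{(s/2)}(\bu)$ as a Laplace (cofactor) expansion along its first column. The $(n,1)$ entry is $u_n^{-s/2}$ and its cofactor is $(-1)^{n+1}$ times the minor obtained by deleting row $n$ and the first column; this minor has rows indexed by $j\neq n$ and carries the monomials $u_j,u_j^2,\dots,u_j^{N-1}$, so pulling the common factor $u_j$ out of each row $j$ leaves a genuine Vandermonde determinant in the $N-1$ variables $(u_j)_{j\neq n}$. This yields
$$V_N^{(s/2)}(\bu)=\sum_{n=1}^N(-1)^{n+1}\,u_n^{-s/2}\Big(\prod_{j\neq n}u_j\Big)\prod_{\substack{1\le i<j\le N\\ i,j\neq n}}(u_j-u_i).$$

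Next I would divide by $V_N^{(0)}(\bu)$, which is exactly the $s=0$ specialization of the determinant (the first column becomes all ones), so $V_N^{(0)}(\bu)=\prod_{1\le i<j\le N}(u_j-u_i)$. The quotient of the two Vandermonde products is $1\big/\!\big[\prod_{j>n}(u_j-u_n)\prod_{i<n}(u_n-u_i)\big]$, since the factors present in the full product but absent in the one over $\{1,\dots,N\}\setminus\{n\}$ are precisely those involving the index $n$. Because $u_1<u_2<\dots<u_N$, every factor in that denominator is strictly positive, hence it equals $\prod_{j\neq n}|u_j-u_n|$. Substituting and using $(-1)^{n+1}=(-1)^{n-1}$ gives
$$\frac{V_N^{(s/2)}(\bu)}{V_N^{(0)}(\bu)}=\sum_{n=1}^N(-1)^{n-1}\frac{1}{u_n^{s/2}}\prod_{\substack{1\le j\le N\\ j\neq n}}\frac{u_j}{|u_j-u_n|},$$
which is the claimed identity.

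An alternative route, closer in spirit to the proof of Proposition \ref{prop:determinant}, is to invoke Cramer's rule directly: $V_N^{(s/2)}(\bu)/V_N^{(0)}(\bu)$ is the first coefficient $c_1$ of the polynomial $P$ of degree $N-1$ determined by $P(u_n)=u_n^{-s/2}$, i.e.\ $c_1=P(0)$. Lagrange interpolation then gives $P(0)=\sum_{n}u_n^{-s/2}\prod_{j\neq n}\frac{u_j}{u_j-u_n}$, and the same monotonicity argument converts $\prod_{j\neq n}(u_j-u_n)^{-1}$ into $(-1)^{n-1}\prod_{j\neq n}|u_j-u_n|^{-1}$.

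The only delicate point — the ``obstacle'', such as it is — is the sign bookkeeping: matching the cofactor sign $(-1)^{n+1}$ (equivalently, in the Lagrange version, the sign of $\prod_{j\neq n}(-u_j)$) against the sign produced when rewriting $\prod_{i<n}(u_n-u_i)$, or $\prod_{j>n}(u_n-u_j)$, in terms of absolute values, and counting that exactly $n-1$ of the differences $u_j-u_n$ are negative. Everything else is routine manipulation of Vandermonde determinants.
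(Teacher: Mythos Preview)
Your proof is correct and follows essentially the same route as the paper: cofactor expansion along the first column, factoring $\prod_{j\neq n}u_j$ out of the minor to reveal the smaller Vandermonde, and then identifying the missing factors in $V_N^{(0)}(\bu)$ as the (positive) products $\prod_{j>n}(u_j-u_n)\prod_{i<n}(u_n-u_i)$. Your alternative via Cramer's rule and Lagrange interpolation is also valid and mirrors the argument used for Proposition~\ref{prop:determinant}; the paper in fact remarks that the two viewpoints are interchangeable.
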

\begin{proof}
Observe that $V_N^{(0)}(\bu)$ denotes the usual determinant
of a Vandermonde matrix.
Let us denote by $V_N^{-n}$ the following determinant
$$
V_N^{-n} =
\begin{vmatrix}
 u_1     & u_1^2     & \cdots & u_1^{N-1} \\
 u_2     & u_2^2     & \cdots & u_2^{N-1} \\
\vdots   & \vdots    & \vdots & \vdots \\
 u_{n-1} & u_{n-1}^2 & \cdots & u_{n-1}^{N-1} \\
 u_{n+1} & u_{n+1}^2 & \cdots & u_{n+1}^{N-1} \\
\vdots   & \vdots    & \vdots & \vdots \\
 u_N     & u_N^2     & \cdots & u_N^{N-1} \\
\end{vmatrix},\qquad n=1,\ldots,N.
$$
Then, it is obvious that
$$
V_N^{-n} =
\left(\prod_{\substack {1\le j\le N\\j\ne n}} u_j\right)
V_{N-1}^{(0)}(u_1,\ldots,u_{n-1},u_{n+1},\ldots,u_N).
$$
By looking closely at the missing products, we obtain that
$$
V_N^{-n} = 
\left(\prod_{\substack {1\le j\le N\\j\ne n}} u_j\right)
\frac{V_N^{(0)}(\bu)}
     {\prod_{j=1}^{n-1} (u_n-u_j)
     \prod_{l=n+1}^{N} (u_{l}-u_{n})}
= (-1)^{n-1} V_N^{(0)}(\bu)
 \prod_{\substack {1\le j\le N \\j \ne n}} \frac{u_j}{u_j-u_n}.
$$ 
We have thus
\begin{equation*}
\label{eq:GenVandermondeDet}
\frac{V_N^{(s/2)}(\bu)}{ V_N^{(0)}(\bu) }
= \sum_{n=1}^N (-1)^{n+1} \frac{1}{u_n^{s/2}}
\frac{V_N^{-n}}{ V_N^{(0)}(\bu)} 
= \sum_{n=1}^N \frac{1}{u_n^{s/2}}
               \prod_{\substack {1\le j\le N\\j\ne n}}
                \frac {u_j}{u_j - u_n}.
\end{equation*}
As the sequence $(u_n,\, n=1,\ldots N)$ is strictly increasing,
the sum (\ref{eq:Alternating-Sum}) is alternating as announced.
\end{proof}
We have the following result which generalize theorem
\ref{th:Dixon}
\begin{proposition}
\label{prop:alternating-Sum}
Let $0<u_1<u_2<\ldots<u_N$ be an arbitrary increasing sequence of
positive real number, let $s\in\Cc$ with
$s\neq -2,-4,\ldots,-2(N-1)$ and let 
$\bX=(X_1,\ldots,X_{N-1})$ denote a random vector
with Dixon-Anderson density  $\mathcal{D}_{N-1}(\bx;\mathbf{1}_{N-1},\bu)$ then
\begin{equation}
\label{eq:alternating-Sum}
\frac{2}{s\Gamma_{N-1}\left(\frac{s}{2}\right)}
{N^{s/2}}\Esp{ \frac{\prod_{n=1}^{N-1} X_n^{s/2}}{\prod_{n=1}^{N} u_n^{s/2}}}
=\frac{V_N^{(s/2)}(\bu)}{ V_N^{(0)}(\bu) }=\sum_{n=1}^N  (-1)^{n-1}\frac{1}{u_n^{s/2}}
\prod_{\substack{1 \le j\le N\\j\ne n}}\frac{u_j}{|u_j-u_n| }
\end{equation}
\end{proposition}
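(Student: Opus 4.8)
The second equality in (\ref{eq:alternating-Sum}) is exactly Lemma~\ref{lemma:SumGenVandermonde}, so the work is to prove the first equality, i.e.\ to realise the ratio $V_N^{(s/2)}(\bu)/V_N^{(0)}(\bu)$ as a Dixon--Anderson expectation. The plan is to reproduce, with the nodes $n^2$ replaced by the arbitrary increasing positive nodes $u_1<\dots<u_N$, the chain of manipulations carried out in the proof of Lemma~\ref{lemma:Gen-Vandermonde}. The only structural input that was special to $u_n=n^2$ there was the factorisation of the Vandermonde determinant, and that role is now played directly by $V_N^{(0)}(\bu)=\prod_{1\le i<j\le N}(u_j-u_i)$.

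Step by step: I would first pull $u_n^{-s/2}$ out of the $n$-th row of $V_N^{(s/2)}(\bu)$, so that the first column becomes a column of $1$'s and the $k$-th of the remaining columns becomes $\bigl(u_n^{\,k+s/2}\bigr)_{n}$; then perform the row operations $\mathrm{row}_{m+1}\leftarrow\mathrm{row}_{m+1}-\mathrm{row}_{m}$ for $m=N-1,\dots,1$ (in that order, so each row gets the unmodified row above it subtracted), which leave the determinant unchanged and annihilate the column of $1$'s below its first entry; expand along that column to reduce to the $(N-1)\times(N-1)$ determinant with $(m,k)$-entry $u_{m+1}^{\,k+s/2}-u_{m}^{\,k+s/2}$; write this entry as $\bigl(k+\tfrac s2\bigr)\int_{u_m}^{u_{m+1}}x_m^{\,k-1+s/2}\,dx_m$, so the scalar $\prod_{k=1}^{N-1}\bigl(k+\tfrac s2\bigr)$ comes out and, by multilinearity of the determinant in its rows, the $N-1$ one-dimensional integrals can be taken outside; finally factor $x_m^{s/2}$ out of the $m$-th row to recover an ordinary $(N-1)\times(N-1)$ Vandermonde determinant in $x_1,\dots,x_{N-1}$, equal to $\prod_{1\le i<j\le N-1}(x_j-x_i)$. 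This yields
\[
V_N^{(s/2)}(\bu)=\Bigl(\prod_{n=1}^{N}u_n^{-s/2}\Bigr)\prod_{k=1}^{N-1}\Bigl(k+\tfrac s2\Bigr)\int_{\calX_{N-1}}d\bx\ \prod_{1\le i<j\le N-1}(x_j-x_i)\ \prod_{m=1}^{N-1}x_m^{s/2},
\]
the integration being over $\calX_{N-1}=\{u_1<x_1<u_2<\dots<x_{N-1}<u_N\}$.

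It then remains to identify the two prefactors. From the defining product of $\Gamma_{N-1}(s/2)$, telescoping $\prod_{n=1}^{N-1}(1+1/n)^{s/2}=N^{s/2}$ and using $\prod_{n=1}^{N-1}(1+s/(2n))=\prod_{n=1}^{N-1}(n+s/2)/(N-1)!$, one gets $\prod_{k=1}^{N-1}\bigl(k+\tfrac s2\bigr)=2N^{s/2}(N-1)!/\bigl(s\Gamma_{N-1}(s/2)\bigr)$. On the other hand, reading (\ref{eq:Dixon-Gen}) with all $\alpha_i=1$ and nodes $\bu$ shows that the Dixon--Anderson density of the statement equals $(N-1)!\,\prod_{1\le i<j\le N-1}(x_j-x_i)\big/\prod_{1\le i<j\le N}(u_j-u_i)$ on $\calX_{N-1}$, while $V_N^{(0)}(\bu)=\prod_{1\le i<j\le N}(u_j-u_i)$. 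Dividing the displayed identity by $V_N^{(0)}(\bu)$ and recognising the density, the integral becomes $\tfrac{1}{(N-1)!}V_N^{(0)}(\bu)\,\Esp{\prod_{m=1}^{N-1}X_m^{s/2}}$; the factors $(N-1)!$ and $V_N^{(0)}(\bu)$ cancel, leaving $\tfrac{2}{s\Gamma_{N-1}(s/2)}N^{s/2}\,\Esp{\prod_{n=1}^{N-1}X_n^{s/2}\big/\prod_{n=1}^{N}u_n^{s/2}}$, which is the asserted first equality.

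I do not anticipate a genuine obstacle: once a branch of $x\mapsto x^{s/2}$ on $(0,\infty)$ is fixed, all integrals above are over the compact closure of $\calX_{N-1}$ with a continuous integrand, so the multilinear expansion and the interchange of the finitely many integrals with the determinant are routine. The hypothesis $s\neq-2,-4,\dots,-2(N-1)$ is exactly what guarantees that no factor $k+\tfrac s2$ vanishes (so the row-difference-to-integral step loses nothing) and that $1/\bigl(s\Gamma_{N-1}(s/2)\bigr)$---which is entire away from these points---is well defined; the excluded value $s=0$ is handled by continuity, both sides tending to $1$. The only point that demands a little care is the $\Gamma_{N-1}$ bookkeeping, and that is a direct rearrangement of its definition introduced in Lemma~\ref{lemma:Gen-Vandermonde}.
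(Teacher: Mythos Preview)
Your proposal is correct and follows precisely the route the paper intends: the paper's own ``proof'' of Proposition~\ref{prop:alternating-Sum} is the single sentence that it ``follows the same steps as in lemma~\ref{lemma:Gen-Vandermonde} and is left to the reader,'' and you have faithfully carried out exactly those steps with $n^2$ replaced by the general nodes $u_n$, together with the identification of the Dixon--Anderson density and the $\Gamma_{N-1}$ bookkeeping. There is nothing to add.
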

The proof follows the same steps as in lemma
\ref{lemma:Gen-Vandermonde} and is left to the reader.
The reader can also note that taking $u_n=n^2$ we get
the expression obtained in section \ref{subsec:integral}.

\begin{remark}
It is clear that if the left hand side of the equation (\ref{eq:alternating-Sum})
converges in some sense as $N\rightarrow\infty$ to a well defined function in $s$,
then this function will be equal to $1$ when $s=0$ and equal to $0$ when
$s=-2k$, $k\in\N^\star$. 
\end{remark}

Finally we note that a similar lemma have been proved in \cite{biane2001probability}
using exponential random variables
\begin{lemma}
For $(\varepsilon_n,1\leq n\leq N)$ independent standard exponential variables,
and $u_1,u_2,\ldots,u_N$ an arbitrary sequence of numbers all distincts and strictly
positive, if $\Re(s/2)>-N$
$$
\Esp{\left(\sum_{n=1}^N \frac{\varepsilon_n}{u_n} \right)^{s/2}} = \Gamma\left(1+\frac{s}{2} \right)
\sum_{n=1}^N \left(\frac{1}{u_n} \right)^{s/2}
\prod_{\substack{1 \le j\le N\\j\ne n}}\frac{u_j}{u_j-u_n}.
$$
\end{lemma}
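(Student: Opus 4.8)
The plan is to mimic Biane--Yor's original argument from the excerpt's Theorem \ref{th:biane}, but with the reference weights $1/n^2$ replaced by the arbitrary weights $1/u_n$. The starting observation is that for a standard exponential variable $\varepsilon$ one has $\Esp{\varepsilon^{s/2}} = \Gamma(1 + s/2)$ whenever $\Re(s/2) > -1$, and more generally, for a Gamma$(\lambda)$ variable with rate parameter $\mu$, $\Esp{(\cdot)^{s/2}} = \mu^{-s/2}\Gamma(\lambda + s/2)/\Gamma(\lambda)$. So I would introduce the random variable $S_N = \sum_{n=1}^N \varepsilon_n/u_n$ and compute $\Esp{S_N^{s/2}}$ in two ways: once directly via the density of $S_N$, and once by recognizing $S_N$ as a mixture/weighted combination whose law has an explicit density on $(0,\infty)$.

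First I would write down the density of $S_N$. Since $\varepsilon_n/u_n$ is exponential with rate $u_n$, the sum of independent exponentials with \emph{distinct} rates $u_1,\dots,u_N$ has the well-known hypoexponential density
\[
f_{S_N}(t) = \sum_{n=1}^N u_n \, e^{-u_n t} \prod_{\substack{1\le j\le N\\ j\ne n}} \frac{u_j}{u_j - u_n}, \qquad t>0,
\]
obtained by partial-fraction decomposition of the Laplace transform $\prod_{n=1}^N u_n/(u_n + \xi)$ (this is exactly the kind of partial-fraction identity the paper invokes in annex \ref{sec:pfd} and in the proof of Proposition \ref{prop:determinant}). Then
\[
\Esp{S_N^{s/2}} = \int_0^\infty t^{s/2} f_{S_N}(t)\, dt
= \sum_{n=1}^N u_n \Biggl(\prod_{\substack{1\le j\le N\\ j\ne n}} \frac{u_j}{u_j - u_n}\Biggr) \int_0^\infty t^{s/2} e^{-u_n t}\, dt.
\]
The inner integral is $\Gamma(1 + s/2)\, u_n^{-(1 + s/2)}$, valid for $\Re(s/2) > -1$, so the $u_n$ prefactor cancels one power and one obtains exactly
\[
\Esp{S_N^{s/2}} = \Gamma\!\left(1 + \tfrac{s}{2}\right) \sum_{n=1}^N \left(\frac{1}{u_n}\right)^{s/2} \prod_{\substack{1\le j\le N\\ j\ne n}} \frac{u_j}{u_j - u_n},
\]
which is the claimed identity. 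The condition $\Re(s/2) > -N$ rather than $>-1$ must then be recovered by analytic continuation: the right-hand side is manifestly entire in $s$, and the left-hand side $\Esp{S_N^{s/2}}$ defines a holomorphic function of $s$ on the half-plane where the integral $\int_0^\infty t^{\Re(s/2)} f_{S_N}(t)\,dt$ converges; near $t=0$ the density $f_{S_N}$ vanishes to order $t^{N-1}$ (the first $N-1$ moments of the Laplace-transform expansion cancel), so convergence holds for $\Re(s/2) > -N$, and the two holomorphic functions agreeing on $\Re(s/2)>-1$ agree on the larger half-plane.

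The main obstacle I anticipate is the behaviour of $f_{S_N}$ near the origin: the pointwise partial-fraction formula above does \emph{not} visibly vanish at $t=0$ term by term, yet the sum must be $O(t^{N-1})$ for the integral to converge down to $\Re(s/2) = -N$. Establishing this requires the identities $\sum_{n=1}^N u_n^k \prod_{j\ne n} u_j/(u_j-u_n) = 0$ for $k = 0,1,\dots,N-2$ (and $=$ the $k$-th elementary-type symmetric quantity for $k=N-1$), which are themselves partial-fraction / Lagrange-interpolation facts — precisely the content already used in the excerpt. A slightly cleaner alternative, avoiding the cancellation bookkeeping, is to bypass the density entirely: expand $\Esp{S_N^{s/2}}$ using the integral representation $t^{s/2} = \frac{1}{\Gamma(-s/2)}\int_0^\infty (\lambda^{-1-s/2})(1 - e^{-\lambda t})\cdots$ — but this reintroduces the same half-plane restriction. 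I would therefore present the density computation together with the moment-cancellation lemma, and close with the analytic-continuation remark; this matches the style and the tools (partial fractions, Vandermonde/Lagrange interpolation) already deployed throughout the paper.
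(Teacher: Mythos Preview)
The paper does not actually prove this lemma: it merely states it and attributes it to \cite{biane2001probability}. So there is no ``paper's own proof'' to compare against, and your proposal supplies an argument where the paper gives none. Your approach --- write down the hypoexponential density of $S_N=\sum_n \varepsilon_n/u_n$ via partial fractions, integrate $t^{s/2}$ against it term by term for $\Re(s/2)>-1$, then extend by analytic continuation using the $O(t^{N-1})$ vanishing of the density at the origin --- is correct and is essentially the classical route.

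Two small corrections are worth making. First, the Lagrange-interpolation identities you quote are mis-indexed: one has
\[
\sum_{n=1}^N u_n^{k}\prod_{\substack{j\ne n}}\frac{u_j}{u_j-u_n}
=\sum_{n=1}^N u_n^{k}\,L_n(0)
=\begin{cases}1,&k=0,\\ 0,&k=1,\dots,N-1,\end{cases}
\]
since this is just $x^{k}$ evaluated at $x=0$. What you need for $f_{S_N}(t)=O(t^{N-1})$ is the vanishing for $k=1,\dots,N-1$ (the Taylor coefficient of $t^{m}$ in $f_{S_N}$ involves the sum with exponent $k=m+1$), not $k=0,\dots,N-2$.

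Second, the right-hand side is \emph{not} ``manifestly entire'': the factor $\Gamma(1+s/2)$ has simple poles at $s=-2,-4,\dots$. What is true is that on the half-plane $\Re(s/2)>-N$ these poles are exactly cancelled by zeros of the accompanying sum, again by the identities above (at $s=-2k$ the sum becomes $\sum_n u_n^{k}\prod_{j\ne n}u_j/(u_j-u_n)=0$ for $k=1,\dots,N-1$). With that observation in place, both sides are holomorphic on $\Re(s/2)>-N$ and agree on $\Re(s/2)>-1$, so the analytic-continuation step goes through.
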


\subsection{A family of joint density probability}
Let $\bu\in \R^N$ and $\bx\in\R^{N-1}$ be two interlacing vectors
in the sense that they lie in the region $\calX_N^\prime$ defined as
$$
\calX_N^\prime=\left\{0<u_1<x_1<u_2<\ldots < u_{N-1} < x_{N-1} < u_N\right\}.
$$
Let $g$ denote a positive function over $\R_+$ to be precised hereafter.
We define a joint density probability over $\calX_N^\prime$ by putting
\begin{eqnarray}\label{eq:pdf-jointe}
f_{\bX,\bU}(\bx,\bu)& =& \frac{(N-1)!N!}{Z_N}
{V_{N-1}^{(0)}(\bx)}\;\ V_N^{(0)}(\bu)
\prod_{n=1}^N g(u_n) \nonumber\\
&=&
\mathcal{D}_{N-1}(\bx;\mathbf{1}_{N-1},\bu)\;\frac{N!}{Z_N}
\left(V_N^{(0)}(\bu)\right)^2
\prod_{n=1}^N g(u_n).
\end{eqnarray}
It is quite evident that if $(\bX,\bU)$ are two random interlacing
vectors with such distribution, then the distribution
of $\bX$ conditional to $\bU=\bu$ is a Dixon-Anderson random
vector of density $\calD_{N-1}(\bx;\mathbf{1}_{N-1}, \bu)$.
The marginal distributions of $\bX$ and $\bU$ are obtained
by integrating the probability density function (\ref{eq:pdf-jointe})
with respect to $\bu$ and $\bx$ respectively.
Integrating with respect to $\bx$, we find that the density of $\bU$ is
\begin{equation}\label{eq:pdf-u}
f_{\bU}(\bu)= \frac{N!}{Z_N}  \left(V_N^{(0)}(\bu)\right)^2
\prod_{n=1}^N g(u_n).
\end{equation}
over the domain $\calU_N=\left\{0<u_1<u_2<\ldots < u_N\right\}$.
Note that, as $f_{\bU}$ is invariant under permutation, we have
$$
Z_n = \int_0^{\infty} du_1\ldots \int_0^{\infty} du_n
\left(V_N^{(0)}(\bu)\right)^2
\prod_{n=1}^N g(u_n)
$$
assuming the integral exists. Thus if $(\bX,\bU)$ are random vectors
with joint probability density function (\ref{eq:pdf-jointe})
and $s\neq -2, -4, \ldots, -2(N-1)$ then it follows from
identity given in (\ref{eq:alternating-Sum}) that
\begin{equation}\label{eq:equality}
\frac{2}{s\Gamma_{N-1}\left(\frac{s}{2}\right)}
N^{s/2}\Esp{ \frac{\prod_{n=1}^{N-1} X_n^{s/2}}{ \prod_{n=1}^{N}{U_n^{s/2}}} }
=\Esp{
\frac{V_N^{(s/2)}(\bU)}{V_N^{(0)}(\bU)}
}
\end{equation}
assuming again that the expectations involved in this equality exist and are finite.

There is two obvious choices for $g$ allowing us
to compute theses expectations: the Jacobi ensemble and the Laguerre ensemble.

\subsubsection{The Jacobi Ensemble}
We set $g(u)= u^{a-1} (1-u)^{b-1} \mathbb{1}_{(0,1)}(u)$ with $a,b>0.$
In this case, the distribution of $\bU$ conditional to
$\bX=\bx$ is a Dixon-Anderson random
vector of density $D_{N}\left(\bu;(a,\mathbf{1}_{N-1},b), (0,\bx,1)\right)$
and the marginal distribution of $\bU$ is a Selberg density $S_N(\bu;a,b,1)$
(see \cite{forrester2012fuchsian}) with  $S_N(\bu;a,b,\lambda)$ given by
\begin{equation}\label{eq-Selberg-Density}
S_N(\bu;a,b,\lambda)=\frac{N!}{S_N(a,b,\lambda)}\left(V_N^{(0)}(\bu)\right)^{2\lambda}
\prod_{n=1}^N u_n^{a-1} (1-u_n)^{b-1} 
\end{equation}
when supported on  $\calU_N=\left\{ 0<u_1<u_2<\ldots < u_N<1\right\}$.
$S_N(a,b,\lambda)$ denotes the Selberg's integral formula
(see \cite{andrews_askey_roy_1999}, chapitre 8. We choose
the definition given in this reference rather than the one given
in \cite{forrester2012fuchsian}). We have thus
$$
Z_N = S_N(a,b,1) =
\prod_{n=0}^{N-1} \frac{\Gamma(a+n)\Gamma(b+n)\Gamma(2+n)}
{\Gamma(a+b-1+N+n)}
$$
Integrating (\ref{eq:pdf-jointe}) with respect to $\bu$ 
gives the marginal density of $\bX$
\begin{eqnarray}\label{eq:pdf-jacobi-x}
f_{\bX}(\bx; a, b)&=&\frac{(N-1)!N!}{ S_N(a,b,1)} \frac{\Gamma(a)\Gamma(b)}
                         {\Gamma(a+b-1+N)}
 \left(V^{(0)}_{N-1}(\bx)\right)^2
\prod_{n=1}^{N-1} x_n^{a} (1-x_n)^{b}\nonumber\\
&=& \frac{(N-1)!}{S_{N-1}(a+1,b+1,1)} \left(V^{(0)}_{N-1}(\bx)\right)^2
\prod_{n=1}^{N-1} x_n^{a} (1-x_n)^{b}
\end{eqnarray}
i.e. the marginal density of $\bX$ is the Selberg density $S_{N-1}(\bx;a+1,b+1,1)$
supported over
$$\calX_{N-1}=\left\{ 0<x_1<x_2<\ldots <x_{N-1} <1\right\}.$$

\subsubsection{The Laguerre Ensemble}
We set now $g(u)= u^{a-1} e^{-u/b}\mathbb{1}_{(0,+\infty)}(u)$ with $a,b>0$.
The joint density of $(\bX,\bU)$ can be obtained as a limit of the Jacobi ensemble
case by changing variables $u_n = v_n/L$,
$x_n = y_n/L$, replacing $b-1$ by $L/\theta$ and by taking the limit
$L\rightarrow\infty$. We have in this case
$$
Z_N = W_N(a,\theta) = \lim_{L\rightarrow\infty}
\frac{S_N(a,L/\theta+1,1)}{L^{(a+N)N}}
= \theta^{(a+N)N} \prod_{n=0}^{N-1}\Gamma(a+n)\Gamma(2+n).
$$
The marginal distribution of $\bU$ is a Laguerre density
$$
L(\bu;a,\theta) = \frac{N!}{W_N(a,\theta)}  \left(V^{(0)}_{N}(\bu)\right)^2
\prod_{n=1}^{N} u_n^{a} e^{-u_n/\theta}
$$
supported on $\calU_N=\left\{ 0<u_1<u_2<\ldots < u_N<1\right\}$.
Integrating (\ref{eq:pdf-jointe}) with respect to $\bu$ 
gives the marginal density of $\bX$
\begin{eqnarray}\label{eq:pdf-laguerre-x}
f_{\bX}(\bx; a, \theta)&=&\frac{(N-1)!N!}{ W_N(a,\theta)} \theta^{a+N}\Gamma(a)
 \left(V^{(0)}_{N-1}(\bx)\right)^2\prod_{n=1}^{N-1} x_n^{a} e^{-x_n/\theta}\nonumber\\
&=& \frac{(N-1)!}{W_{N-1}(a+1,\theta)} \left(V^{(0)}_{N-1}(\bx)\right)^2
\prod_{n=1}^{N-1} x_n^{a} e^{-x_n/\theta}
\end{eqnarray}
i.e. the marginal density of $\bX$ is a Laguerre density $L_{N-1}(\bx;a+1,\theta)$.

\subsection{Main Result}

\begin{theorem}
\label{th:Averaged-Gen-Vandermonde}
Let $a,b,\theta>0$ and $\Re(a-s/2)>0$ and let $\bU$ be a random vector
of $\R^N$.
If the distribution of $\bU$ is the Selberg density $S_N(\bu;a,b,1)$ supported
on  $\calU_N=\left\{ 0<u_1<u_2<\ldots < u_N<1\right\}$ then
$$
\Esp{\frac{V_N^{(s/2)}(\bU)}{ V_N^{(0)}(\bU)}}
= \frac{2N^{s/2}}{s\Gamma_{N-1}(s)}
\frac{\Gamma\left(a+b-1+N\right)}
     {\Gamma\left(a-\frac{s}{2}+b-1+N\right)}
\frac{\Gamma\left(a-\frac{s}{2}\right)}
     {\Gamma(a)}.
$$
If the distribution of $\bU$ is the Laguerre ensemble density
$L(\bu;a,\theta)$  supported on $\calU_N^\prime=\left\{ 0<u_1<u_2<\ldots < u_N\right\}$
then
$$
\Esp{\frac{V_N^{(s/2)}(\bU)}{ V_N^{(0)}(\bU)}}
= \frac{2}{s\Gamma_{N-1}(s)} \left(\frac{N}{\theta} \right)^{s/2}
\frac{\Gamma\left(a-\frac{s}{2}\right)}
     {\Gamma(a)}.
$$
\end{theorem}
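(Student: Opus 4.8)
The plan is to reduce the theorem to identity~(\ref{eq:equality}) together with Selberg's integral formula, using the elementary observation that dividing a Jacobi (resp. Laguerre) weight by $u^{s/2}$ simply shifts the parameter $a$ to $a-s/2$. Recall that for $g(u)=u^{a-1}(1-u)^{b-1}$ on $(0,1)$ the marginal law of $\bU$ in~(\ref{eq:pdf-jointe}) is the Selberg density $S_N(\bu;a,b,1)$, and for $g(u)=u^{a-1}e^{-u/\theta}$ on $(0,+\infty)$ it is the Laguerre density $L(\bu;a,\theta)$; hence the quantity $\Esp{V_N^{(s/2)}(\bU)/V_N^{(0)}(\bU)}$ in the statement is exactly the right-hand side of~(\ref{eq:equality}). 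So the first step is to write
$$
\Esp{\frac{V_N^{(s/2)}(\bU)}{V_N^{(0)}(\bU)}}=\frac{2N^{s/2}}{s\,\Gamma_{N-1}\!\left(\tfrac{s}{2}\right)}\;\Esp{\frac{\prod_{n=1}^{N-1}X_n^{s/2}}{\prod_{n=1}^{N}U_n^{s/2}}},
$$
with $(\bX,\bU)$ distributed as in~(\ref{eq:pdf-jointe}), and to focus on the expectation on the right.

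Next I would absorb $\prod_n U_n^{-s/2}$ into the weight. Writing $Z_N(a)$ for the normalizing constant of~(\ref{eq:pdf-u}) seen as a function of $a$ (that is, $S_N(a,b,1)$ in the Jacobi case and $W_N(a,\theta)$ in the Laguerre case), and noting that $u^{-s/2}g(u)$ is again a weight of the same type with $a$ replaced by $a-s/2$, one gets
$$
\Esp{\frac{\prod_{n=1}^{N-1}X_n^{s/2}}{\prod_{n=1}^{N}U_n^{s/2}}}=\frac{Z_N(a-s/2)}{Z_N(a)}\;\widetilde{\mathbb{E}}\!\left[\prod_{n=1}^{N-1}X_n^{s/2}\right],
$$
where $\widetilde{\mathbb{E}}$ denotes expectation under the joint law~(\ref{eq:pdf-jointe}) with $a$ replaced by $a-s/2$. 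Under $\widetilde{\mathbb{E}}$ the marginal of $\bX$ is, by~(\ref{eq:pdf-jacobi-x}) in the Jacobi case and~(\ref{eq:pdf-laguerre-x}) in the Laguerre case, the Selberg density $S_{N-1}(\bx;a-s/2+1,b+1,1)$, resp. the Laguerre density $L_{N-1}(\bx;a-s/2+1,\theta)$; and for such a density the moment $\mathbb{E}\prod_n X_n^{s/2}$ is read off immediately, from the density itself, as a ratio of normalizing constants, $S_{N-1}(a+1,b+1,1)/S_{N-1}(a-s/2+1,b+1,1)$, resp. $W_{N-1}(a+1,\theta)/W_{N-1}(a-s/2+1,\theta)$. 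This is the step where $\Re(a-s/2)>0$ is used: it is precisely the condition ensuring that $Z_N(a-s/2)$ and $S_{N-1}(a-s/2+1,b+1,1)$ (resp. the Laguerre analogues) are finite, so that every expectation above is well defined. (When $s\in\{-2,\ldots,-2(N-1)\}$ the left-hand side of the theorem vanishes, two columns of $V_N^{(s/2)}(\bU)$ coinciding, and the right-hand side vanishes too since $\Gamma_{N-1}(s/2)=\infty$, so the formulas hold there by continuity.)

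It then remains only to insert Selberg's formula $S_N(a,b,1)=\prod_{n=0}^{N-1}\Gamma(a+n)\Gamma(b+n)\Gamma(2+n)/\Gamma(a+b-1+N+n)$, resp. $W_N(a,\theta)=\theta^{(a+N)N}\prod_{n=0}^{N-1}\Gamma(a+n)\Gamma(2+n)$, into the product
$$
\frac{Z_N(a-s/2)}{Z_N(a)}\cdot\frac{S_{N-1}(a+1,b+1,1)}{S_{N-1}(a-s/2+1,b+1,1)}
$$
(with $S_{N-1}$ replaced by $W_{N-1}$ in the Laguerre case) and simplify. In the Jacobi case all factors $\Gamma(2+n)$ cancel, the $\Gamma(a+\cdot)$ products telescope to $\Gamma(a-s/2)/\Gamma(a)$, and the $\Gamma(a+b-1+N+\cdot)$ products telescope to $\Gamma(a+b-1+N)/\Gamma(a-s/2+b-1+N)$; multiplying by the prefactor $2N^{s/2}/\bigl(s\,\Gamma_{N-1}(s/2)\bigr)$ supplied by~(\ref{eq:equality}) gives the first formula. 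In the Laguerre case the same bookkeeping leaves $\theta^{-sN/2}\theta^{s(N-1)/2}=\theta^{-s/2}$ from the powers of $\theta$ and again $\Gamma(a-s/2)/\Gamma(a)$ from the Gamma functions, producing the second; alternatively one deduces it from the Jacobi identity through the limit ($u_n\mapsto u_n/L$, $b-1\mapsto L/\theta$, $L\to\infty$) already invoked in this section. The only genuinely delicate part is the handling of the two telescoping Gamma-products; everything else is a straightforward assembly of results established earlier in the paper.
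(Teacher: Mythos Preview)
Your argument is correct and is essentially the paper's proof: both start from identity~(\ref{eq:equality}), absorb $\prod_n U_n^{-s/2}$ as the parameter shift $a\mapsto a-s/2$, and finish with Selberg's product formula. The only organisational difference is that the paper integrates $\bu$ out directly via the Dixon--Anderson normalisation (producing the single factor $\Gamma(a-s/2)\Gamma(b)/\Gamma(a-s/2+b-1+N)$ and leaving the $\bx$-integral equal to $S_{N-1}(a+1,b+1,1)$), whereas you pass through the tilted joint law and arrive at the ratio $\dfrac{Z_N(a-s/2)}{Z_N(a)}\cdot\dfrac{S_{N-1}(a+1,b+1,1)}{S_{N-1}(a-s/2+1,b+1,1)}$; the extra Gamma products telescope to the same two factors, so nothing is lost.
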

\begin{proof}
We have
\begin{eqnarray*}
\Esp{\frac{V_N^{(s/2)}(\bU)}{ V_N^{(0)}(\bU)}} &=& \frac{2}{s\Gamma_{N-1}\left(\frac{s}{2}\right)} N^{s/2}
\Esp{ \frac{\prod_{n=1}^{N-1} X_n^{s/2}}{ \prod_{n=1}^{N}{U_n^{s/2}}} }\\
& =&\frac{2N^{s/2}}{s\Gamma_{N-1}\left(\frac{s}{2}\right)}\frac{N!(N-1)!}{Z_N}
\int_{\calX^\prime_N}
\frac{\prod_{n=1}^{N-1} x_n^{s/2}}{ \prod_{n=1}^{N}{u_n^{s/2}}}
\;V_{N-1}^{(0)}(\bx)\;
V_N^{(0)}(\bu)
\prod_{n=1}^N g(u_n) d\bu d\bx.
\end{eqnarray*}
We integrate with respect to $\bu$. In the Jacobi ensemble case, we get
\begin{eqnarray*}
\Esp{\frac{V_N^{(s/2)}(\bU)}{ V_N^{(0)}(\bU)}}&=&
\frac{2N^{s/2}}{s\Gamma_{N-1}\left(\frac{s}{2}\right)}\frac{N!(N-1)!}{S_N(a,b,1)}
\frac{\Gamma\left(a-\frac{s}{2}\right)\Gamma(b)}{\Gamma\left(a-\frac{s}{2}+b-1+N\right)}
\int_{\calX_N}
\left(V_{N-1}^{(0)}(\bx)\right)^2 \prod_{n=1}^N x_n^a (1-x_n)^b d\bx\\
&=&
\frac{2N^{s/2}}{s\Gamma_{N-1}\left(\frac{s}{2}\right)}\frac{N!}{S_N(a,b,1)}
\frac{\Gamma\left(a-\frac{s}{2}\right)\Gamma(b)}{\Gamma\left(a-\frac{s}{2}+b-1+N\right)}
S_{N-1}(a+1,b+1,1)
\end{eqnarray*}
giving after some elementary simplifications the announced result.
The Laguerre ensemble case can be obtain either by integration, or by replacing
$b-1$ by $L/\theta$ and by taking the limit $L\rightarrow\infty$. We let the details
to the reader.
\end{proof}
Finally, we have the following corollary
\begin{corollary}
Let $a,b,\theta>0$ and $\Re(a-s/2)>0$. Then
\begin{multline*}
\int_0^1 du_1\ldots\int_0^1 du_n
V_N^{(s/2)}(\bu)V_N^{(0)}(\bu)
\prod_{n=1}^N u_n^{a-1} (1-u_n)^{b-1}\\=
\frac{2N^{s/2}S_N(a,b,1)}{s\Gamma_{N-1}(s)}
\frac{\Gamma\left(a+b-1+N\right)}
     {\Gamma\left(a-\frac{s}{2}+b-1+N\right)}
\frac{\Gamma\left(a-\frac{s}{2}\right)}
     {\Gamma(a)} 
\end{multline*}
and
\begin{multline*}
\int_0^\infty du_1\ldots\int_0^\infty du_n
V_N^{(s/2)}(\bu)V_N^{(0)}(\bu)
\prod_{n=1}^N u_n^{a-1} e^{-u_n/\theta}\\=
\prod_{n=0}^{N-1}\Gamma(a+n)\Gamma(2+n)
\frac{2\theta^{(a+N)N}}{s\Gamma_{N-1}(s)}
\left(\frac{N}{\theta} \right)^{s/2}
\frac{\Gamma\left(a-\frac{s}{2}\right)}
     {\Gamma(a)}.
\end{multline*}
\end{corollary}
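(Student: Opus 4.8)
The plan is to derive both identities directly from Theorem~\ref{th:Averaged-Gen-Vandermonde} by unfolding the expectation that appears there into an explicit integral over the ordered chamber and then removing the ordering constraint.

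First I would treat the Jacobi case. Here $\bU$ carries the Selberg density $f_{\bU}$ of \eqref{eq:pdf-u} with weight $g(u)=u^{a-1}(1-u)^{b-1}\mathbb{1}_{(0,1)}(u)$, supported on $\calU_N=\{0<u_1<\dots<u_N<1\}$. Writing $\Esp{V_N^{(s/2)}(\bU)/V_N^{(0)}(\bU)}$ against this density, the denominator $V_N^{(0)}(\bu)$ cancels one of the two Vandermonde factors in the Selberg weight $\bigl(V_N^{(0)}(\bu)\bigr)^2$, leaving the integrand $V_N^{(s/2)}(\bu)\,V_N^{(0)}(\bu)\prod_n u_n^{a-1}(1-u_n)^{b-1}$ multiplied by the constant $N!/S_N(a,b,1)$.

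The key observation I would then make is that this integrand is symmetric in $(u_1,\dots,u_N)$: a permutation of the $u_n$ permutes the rows of both determinants $V_N^{(s/2)}(\bu)$ and $V_N^{(0)}(\bu)$, so each is multiplied by the signature of the permutation and the two signatures cancel, while the weight $\prod_n u_n^{a-1}(1-u_n)^{b-1}$ is already symmetric. Hence $N!$ times the integral over $\calU_N$ equals the integral over the full cube $[0,1]^N$, the factorial absorbing the order-statistics normalisation, and rearranging gives
$$
\int_0^1\!\cdots\!\int_0^1 V_N^{(s/2)}(\bu)\,V_N^{(0)}(\bu)\prod_{n=1}^N u_n^{a-1}(1-u_n)^{b-1}\,d\bu
= S_N(a,b,1)\,\Esp{\frac{V_N^{(s/2)}(\bU)}{V_N^{(0)}(\bU)}}.
$$
I would finish by substituting the closed form of this expectation supplied by Theorem~\ref{th:Averaged-Gen-Vandermonde}.

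The Laguerre identity is obtained in exactly the same way, with $g(u)=u^{a-1}e^{-u/\theta}\mathbb{1}_{(0,+\infty)}(u)$, the normalisation $W_N(a,\theta)=\theta^{(a+N)N}\prod_{n=0}^{N-1}\Gamma(a+n)\Gamma(2+n)$ in place of $S_N(a,b,1)$, and the domain $[0,+\infty)^N$ in place of the cube; one then inserts the Laguerre value of the expectation from the theorem. I do not expect a genuine obstacle: the only points deserving care are the permutation invariance of the product of the two individually alternating generalized Vandermonde determinants — which is precisely what justifies passing from the ordered chamber to the full domain — and the integrability near the origin of $u_n^{-s/2}$ against $u_n^{a-1}$, which is exactly the hypothesis $\Re(a-s/2)>0$ and ensures that Fubini applies and all quantities are finite.
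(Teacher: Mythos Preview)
Your argument is correct and is exactly the unwinding the paper has in mind: the corollary is stated without proof immediately after Theorem~\ref{th:Averaged-Gen-Vandermonde}, so the intended derivation is precisely to write the expectation against the Selberg (resp.\ Laguerre) density \eqref{eq:pdf-u}, cancel one Vandermonde factor, use the symmetry of $V_N^{(s/2)}(\bu)V_N^{(0)}(\bu)\prod_n g(u_n)$ to replace $N!\int_{\calU_N}$ by the unordered integral, and read off $\int = Z_N\cdot\Esp{V_N^{(s/2)}(\bU)/V_N^{(0)}(\bU)}$ with $Z_N=S_N(a,b,1)$ or $W_N(a,\theta)$. Your remarks on why the product of the two determinants is permutation-invariant and on the role of the hypothesis $\Re(a-s/2)>0$ for integrability at the origin are the only points that need checking, and both are handled correctly.
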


\section{Conclusion}
It is well-known, even if it is not well understood, that there is a
connection between the random matrix theory and the Zeta function.
For example, Keating and its co-authors (\cite{keating2000random})
successfully use the characteristic polynomials $Z(U, \theta)$
of matrices $U$ in the Circular Unitary Ensemble (CUE)
to study the behavior of the correctly renormalized integral
$$
\int_0^T |\zeta(1/2+it)|^{2\lambda} dt
$$
Our work seems to be a first step in explaining this connection.
The Selberg integral plays a fundamental role in the theory of the various
$\beta$-ensembles (see \cite{forrester2010log}) and the Dixon-Anderson
probability distribution function is an intermediate step to the Selberg's
integral evaluation.

We hope that the results presented in this article will pave the way for
a deeper understanding of the links between these two fields.

\bibliographystyle{plain}
\bibliography{Alternating-Zeta}

\appendix

\section{Partial fraction decomposition}
\label{sec:pfd}

Let $Q$ be a polynomial of degree $N$ such that $ Q(x)=\prod_{n=1}^N (x-x_n)$ with
$x_i\neq x_j$ if $i\neq j$ and let $L_n(x)$, $n=1,\ldots N$ denote the Lagrange's polynomials
$$L_n(x) = \prod_{j=1,\, j\neq n}^N \frac{x-x_j}{x_n-x_j}.$$
If $P$ is a polynomial of degree strictly less than $N$ then by the Lagrange interpolation formula,
it can be written as
$$ P(x)=\sum_{n=1}^N P(x_n)L_n(x).$$
From this, we deduce the partial fraction expansion of $P/Q$
$$ \frac{P(x)}{Q(x)} =\sum_{n=1}^N P(x_n)\frac{L_n(x)}{Q(x)} = \sum_{n=1}^N  \frac{c_n}{x-x_n}
$$
with
$$
\quad c_n=\frac{P(x_n)}{\prod_{j=1, \, j\ne n}^N (x_n-x_j)} = \frac{P(x_n)}{Q'(x_n)}.
$$

\section{Inverse of the Vandermonde's matrix}
\label{app:inverse}
The proof of this formula can be found at 
\url{https://proofwiki.org/wiki/Inverse_of_Vandermonde_Matrix}
and is proposed as Exercise 40 from section 1.2.3 in \cite{knuth1968art}.
Consider the Vandermonde's matrix
\begin{equation}
\label{eq:VandermondeMatrix}
V_N(x_1,\ldots,x_N) = \begin{pmatrix}
  1    & x_1    & x_1^2  & \cdots & x_1^{N-1} \\
  1    & x_2    & x_2^2  & \cdots & x_2^{N-1} \\
\vdots & \vdots & \ddots & \vdots & \vdots \\
  1&   x_N      & x_N^2  & \cdots & x_N^{N-1} \\
\end{pmatrix}
\end{equation}

Assume $x_i\neq x_j$ if $i\neq j$, then the Vandermonde Determinant of $V_N$ is
$$\displaystyle \det \left({V_N(x_1,\ldots,x_N)}\right) = \prod_{1 \mathop \le i \mathop < j \mathop \le N} \left({x_j - x_i}\right) \ne 0.$$
Since this is non-zero, the inverse matrix, denoted $W_N = \left[{w_{ij}}\right]$, is guaranteed to exist.
Using the definition of the matrix product and the inverse matrix
$$\displaystyle \sum_{k \mathop = 1}^N x_i^{k-1} w_{kj}  = \delta_{ij}.$$
For $1\leq n \leq N$, if $P_n \left({x}\right)$ is the polynomial
$$\displaystyle P_n \left({x}\right) := \sum_{k \mathop = 1}^N w_{kn}x^{k-1}$$
then
$P_n \left({x_1}\right) = 0, \ldots, P_n \left({x_{n-1}}\right) = 0, P_n \left({x_n}\right) = 1
, P_n \left({x_{n+1}}\right) = 0, \ldots, P_n \left({x_N}\right) = 0.$
By the Lagrange's interpolation formula, the $n$th column of $W_N$ is composed of the
coefficients of the $n$th Lagrange basis polynomial

$$
\displaystyle P_n \left({x}\right) = \sum_{k \mathop = 1}^N w_{kn} x^{k-1}
 = \prod_{\substack {1 \mathop \le j \mathop \le N \\ j \mathop \ne n}} \frac {x - x_j} {x_n - x_j}.
$$
We can identify the terms $w_{ij}$ by expanding the product. In particular, setting $x=0$,
we get that the constant of the polynomials are
$$
w_{1n} =  \prod_{\substack {1 \mathop \le j \mathop \le N \\ j \mathop \ne n}} \frac {-x_j} {x_n - x_j}
= \prod_{\substack {1 \mathop \le j \mathop \le N \\ j \mathop \ne n}} \frac {x_j} {x_j - x_n},\quad n=1,\ldots,N.
$$

\section{An other determinant representation}

In this part we will show the following result:
\begin{proposition}
if $s\neq 0$, then
\begin{equation}\label{det:tridiag}
\eta_{N}(s) = \frac{1}{2}
\begin{vmatrix}
 1+\frac{1}{\lambda_{2,N}}           & -1                                    & 0      &  \ldots                                    & 0 \\
-\frac{\lambda_{2,N}}{\lambda_{3,N}} & 1+\frac{\lambda_{2,N}}{\lambda_{3,N}} &        &  \ddots                                    & 0  \\
 0                                   &                                       &        &                                            & \vdots \\
\vdots                               & \ddots                                & \ddots &  \ddots                                    &  0 \\
0                                    & \ddots                                &        &  1+\frac{\lambda_{N-2,N}}{\lambda_{N-1,N}} &-1 \\
0                                    & \ldots                                & 0      &  -\frac{\lambda_{N-1,N}}{\lambda_{N,N}}    & 1+\frac{\lambda_{N-1,N}}{\lambda_{N,N}}
\end{vmatrix}
\end{equation}
with $\lambda_{n,N}^{-1} = 2a_{n,N}(n^{-s}-1)$.
\end{proposition}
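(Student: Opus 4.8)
The plan is to collapse the tridiagonal determinant in (\ref{det:tridiag}) to a three-term recurrence and solve it in closed form.

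First I would record the algebraic identity that makes the statement plausible. By the remark following Proposition \ref{prop:determinant}, $\eta_N(0)=\tfrac12$, that is $\sum_{n=1}^N a_{n,N}=\tfrac12$. Since the $n=1$ term of $\sum_n a_{n,N}(n^{-s}-1)$ vanishes, one gets
\[
\eta_N(s)=\sum_{n=1}^N a_{n,N}+\sum_{n=2}^N a_{n,N}(n^{-s}-1)
=\frac12\Bigl(1+\sum_{n=2}^N \frac1{\lambda_{n,N}}\Bigr),
\]
using $\lambda_{n,N}^{-1}=2a_{n,N}(n^{-s}-1)$. So the proposition is equivalent to showing that the determinant equals $1+\sum_{n=2}^N \lambda_{n,N}^{-1}$. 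One also notes that $a_{n,N}\neq 0$ always and that the $\lambda_{n,N}$ are well defined precisely under the hypothesis $s\neq 0$ (excluding the discrete set where some $n^{-s}=1$, $2\le n\le N$); this is where the assumption is used.

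Next I would set $\lambda_{1,N}:=1$, which makes every diagonal entry of the matrix $T$ in (\ref{det:tridiag}) have the uniform form $1+\lambda_{k,N}/\lambda_{k+1,N}$ — the $(1,1)$ entry being $1+\lambda_{1,N}/\lambda_{2,N}=1+1/\lambda_{2,N}$ — with subdiagonal entries $-\lambda_{k,N}/\lambda_{k+1,N}$ and superdiagonal entries all $-1$. Writing $D_k$ for the leading $k\times k$ principal minor of $T$ and expanding along the last row (or column), the standard tridiagonal recursion gives
\[
D_k=\Bigl(1+\frac{\lambda_{k,N}}{\lambda_{k+1,N}}\Bigr)D_{k-1}
-\frac{\lambda_{k,N}}{\lambda_{k+1,N}}\,D_{k-2},
\]
since the product of the two off-diagonal entries straddling row $k$ is $(-1)\cdot(-\lambda_{k,N}/\lambda_{k+1,N})=\lambda_{k,N}/\lambda_{k+1,N}$; the initial conditions are $D_0=1$ and $D_{-1}=0$, chosen so that the recursion also reproduces the true $1\times1$ minor $D_1=1+1/\lambda_{2,N}$.

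Finally I would prove by induction on $k$ that $D_k=1+\sum_{n=2}^{k+1}\lambda_{n,N}^{-1}$; the case $k=N-1$ is the full determinant and, combined with the first step, yields $\eta_N(s)=\tfrac12 D_{N-1}$. The induction step is a one-line telescoping: substituting $D_{k-1}=D_{k-2}+\lambda_{k,N}^{-1}$ into the recursion, the $D_{k-2}$ contributions cancel and the factor $\lambda_{k,N}/\lambda_{k+1,N}$ multiplying $\lambda_{k,N}^{-1}$ becomes $\lambda_{k+1,N}^{-1}$, leaving $D_k=D_{k-2}+\lambda_{k,N}^{-1}+\lambda_{k+1,N}^{-1}$. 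There is no genuine obstacle here; the only thing to watch is the bookkeeping that absorbs the anomalous $(1,1)$ entry into the general pattern (via $\lambda_{1,N}:=1$ and $D_{-1}:=0$) and, if one wishes to be precise, the exact set of $s$ for which every $\lambda_{n,N}$ is finite and nonzero.
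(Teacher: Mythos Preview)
Your proof is correct but follows a genuinely different route from the paper's. The paper derives the tridiagonal form by starting from the determinant representation of Proposition~\ref{prop:determinant}: it multiplies $V_N^{(s)}$ on the right by $W_N=V_N^{-1}$, observes that the resulting matrix is the identity plus a rank-one perturbation (with entries $w_{1n}(n^{-s}-1)$ and $w_{1n}=2a_{n,N}$), factors out $\prod_{n=2}^N \lambda_{n,N}^{-1}$ to obtain a matrix of all $1$'s with $1+\lambda_n$ on the diagonal, and then applies Gaussian elimination to reach the tridiagonal shape. Your argument instead bypasses Proposition~\ref{prop:determinant} entirely: you compute $\eta_N(s)=\tfrac12\bigl(1+\sum_{n=2}^N\lambda_{n,N}^{-1}\bigr)$ directly from the definition and $\eta_N(0)=\tfrac12$, and independently evaluate the tridiagonal determinant via its three-term recursion to obtain the same expression. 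This is more elementary and self-contained; in effect you are running the paper's \emph{next} proposition (the recurrence for $\Delta_{n,N}$) backwards to prove the present one. What the paper's approach buys is an explanation of \emph{where} the tridiagonal matrix comes from (a concrete sequence of row/column operations on a natural matrix), whereas your approach treats the tridiagonal form as given and simply verifies the identity.
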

\begin{proof}
Let $W_N$ denote $V_N^{-1}$, then $|V^{(s)}_NW_N| = |V^{(s)}_N|\times|W_N| = |V^{(s)}_N|/|V_N|$. Some elementary
algebra shows that $V^{(s)}_N W_N$ is equal to the matrix
\begin{equation*}
\label{eq:matrix}V^{(s)}_N W_N=
\begin{pmatrix}
1                & 0                  & 0                  & \ldots & 0 \\
w_{11}(2^{-s}-1) & 1+w_{12}(2^{-s}-1) & w_{13}  (2^{-s}-1) & \ldots & w_{1N}  (2^{-s}-1) \\
w_{11}(3^{-s}-1) &   w_{12}(3^{-s}-1) & 1+w_{13}(3^{-s}-1) & \ldots & w_{1N}(3^{-s}-1)\\
\vdots           & \vdots             &                    &        &\vdots \\
w_{11}(N^{-s}-1) &   w_{12}(N^{-s}-1) & w_{13}(N^{-s}-1)   & \ldots & 1+w_{1N}(N^{-s}-1)
\end{pmatrix}.
\end{equation*}
Appendix \ref{app:inverse} reveals that $w_{1,n}=2a_{n,N}$ for $n=1,\ldots,N$.
If $s=0$, we get the identity matrix as expected, otherwise if $s\neq 0$, $\eta_{N}(s)$
can be written as
\begin{equation}\label{eq:onedet}
\eta_{N}(s) = \frac{1}{2}\prod_{n=2}^N 2a_{n,N} (n^{-s}-1)
\begin{vmatrix}
 1+ \frac{1}{2a_{2,N}(2^{-s}-1)} & 1                            & \ldots & 1 \\
 1                             & 1+\frac{1}{2a_{3,N}(3^{-s}-1)} & \ldots & 1 \\
\vdots                         & \ddots                         &        &   \vdots \\
1                              & 1                              & \ldots & 1+\frac{1}{2a_{N,N}(N^{-s}-1)} 
\end{vmatrix}.
\end{equation}
Setting $\lambda_{n,N}^{-1} =2a_{n,N}(n^{-s}-1)$ and using Gaussian
elimination method\footnote{From last rows/columns to bottom: ($c_{N-1}-c_{N-2}$, $l_{N-1}-l_{N-2}$,
$c_{N-2}-c_{N-3}$,..., $c_1-c_2$, $l_1-l_2$)} we get
\begin{equation*}
\eta_{N}(s) = \frac{1}{2}\prod_{n=2}^N \lambda_n^{-1}
\begin{vmatrix}
1+\lambda_{2,N} & -\lambda_{2,N}              & 0        & \ldots &\ldots        & 0 \\
 -\lambda_{2,N} & \lambda_{2,N}+\lambda_{3,N} & \ddots   &        &              & 0 \\
 0              & \ddots                      & \ddots   & \ddots & \ddots       & \vdots \\
\vdots          & \ddots                      & \ddots   & \ddots & \ddots              &  0 \\
0               &                             &          & \ddots &  \lambda_{N-1,N}+\lambda_{N-2,N} &-\lambda_{N-1,N} \\
0               & \dots                       &          & 0      & -\lambda_{N-1,N}                & \lambda_{N-1,N}+\lambda_{N,N}
\end{vmatrix}.
\end{equation*}
ending the proof.
\end{proof}

\begin{remark}
It is tempting to find a condition on the coefficients of the determinant (\ref{eq:onedet})
in order to find when it is zero.
Lemma \ref{cor:invertible} is equivalent to $\eta_{N}(s)=0$
if and only if $2\sum_{n=2}^N a_{n,N} (n^{-s}-1) =-1$. As $\sum_{n=1}^N a_{n,N} = 1/2$,
this equality can be rewritten
$\sum_{n=2}^N a_{n,N} n^{-s} = -a_{1,N}$ which is not really helpful ! 
\end{remark}

Assume $s\neq 0$, then, setting $\Delta_{0,N} = 0$, $\Delta_{1,N}=1$, the
determinant appearing in equation (\ref{det:tridiag}) can be computed using
the recurrence relation
\begin{equation}\label{eq:DeltaN}\tag{D}
\Delta_{n,N} = \left(1+\frac{\lambda_{n-1,N}}{\lambda_{n,N}}\right)\Delta_{n-1,N} - \frac{\lambda_{n-1,N}}{\lambda_{n,N}} \Delta_{n-2,N}, \quad n=2,\ldots,N
\end{equation}
with the convention $\lambda_{1,N} = 1$.
\begin{proposition}
Let
$$
\beta_{2,N}= \frac{1}{\lambda_{2,N}}\qquad \mbox{ and }\qquad
\beta_{n,N}= \frac{\lambda_{n-1,N}}{\lambda_{n,N}},  \qquad n=3,\ldots,N
$$
and let $\tilde{\Delta}_{n,N}$ denote the solution of the recurrence equation (\ref{eq:DeltaN})
with initial conditions $\tilde{\Delta}_{0,N} = 1$  and $\tilde{\Delta}_{1,N}=0$.
Then (see \cite{wall2018analytic}, p. 15 for the continued fraction
representation) for $n =2,\ldots, N$, we have
$$
\Delta_{n,N} = 1 + \sum_{k=0}^{n-2} \beta_{n-k}\ldots\beta_{2}
= 1+\sum_{k=2}^{n} \lambda_{k,N}^{-1}, \qquad n=2,\ldots, N,
$$
and
$$
\tilde{\Delta}_{n,N} = -\sum_{k=0}^{n-2} \beta_{n-k}\ldots\beta_{2}
= -\sum_{k=2}^{n} \lambda_{k,N}^{-1}, \qquad n=2,\ldots, N.
$$
Thus $\Delta_{N,N} = 2\eta_N(s)$, $\tilde{\Delta}_{N,N} = 1 - 2\eta_N(s)$
and
$$
\frac{1}{2\eta_N(s)} = 1 + \frac{1-\beta_{2,N}}{\beta_{2,N}+}
       \frac{1-\beta_{3,N}}{\beta_{3,N}+}\ldots\frac{1-\beta_{N-1,N}}{\beta_{N,N}}.
$$
\end{proposition}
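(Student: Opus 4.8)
The plan is to run everything through the three-term recurrence (\ref{eq:DeltaN}): first obtain the two closed forms by telescoping, then identify the terminal values with $2\eta_N(s)$ and $1-2\eta_N(s)$ via the determinant identity (\ref{det:tridiag}), and finally read off the continued fraction from the classical correspondence between a linear second-order recurrence and a terminating continued fraction.

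Rewrite (\ref{eq:DeltaN}) in terms of the ratios $\beta_{n,N}$. With the convention $\lambda_{1,N}=1$ one has $\beta_{n,N}=\lambda_{n-1,N}/\lambda_{n,N}$ uniformly for $n=2,\dots,N$ (this covers $n=2$, where $\beta_{2,N}=\lambda_{2,N}^{-1}$), so (\ref{eq:DeltaN}) reads $\Delta_{n,N}=(1+\beta_{n,N})\Delta_{n-1,N}-\beta_{n,N}\Delta_{n-2,N}$, equivalently
\[
\Delta_{n,N}-\Delta_{n-1,N}=\beta_{n,N}\bigl(\Delta_{n-1,N}-\Delta_{n-2,N}\bigr).
\]
Thus the increments $\delta_n:=\Delta_{n,N}-\Delta_{n-1,N}$ satisfy $\delta_n=\beta_{n,N}\delta_{n-1}$ with $\delta_1=\Delta_{1,N}-\Delta_{0,N}=1$, hence $\delta_n=\beta_{n,N}\beta_{n-1,N}\cdots\beta_{2,N}$, a product that telescopes to $\lambda_{n,N}^{-1}$. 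Summing, $\Delta_{n,N}=\Delta_{1,N}+\sum_{k=2}^{n}\delta_k=1+\sum_{k=2}^{n}\lambda_{k,N}^{-1}$, which is the stated formula (the product form $1+\sum_{k=0}^{n-2}\beta_{n-k,N}\cdots\beta_{2,N}$ is the same sum after $k\mapsto n-k$). Since $\tilde\Delta_{n,N}$ obeys the same recurrence with $\tilde\delta_1=\tilde\Delta_{1,N}-\tilde\Delta_{0,N}=-1$, the identical computation gives $\tilde\Delta_{n,N}=-\sum_{k=2}^{n}\lambda_{k,N}^{-1}$; in particular $\Delta_{n,N}+\tilde\Delta_{n,N}=1$ for every $n$.

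For the terminal values, recall that the recurrence (\ref{eq:DeltaN}) was designed to compute the successive leading principal minors of the $(N-1)\times(N-1)$ tridiagonal matrix in (\ref{det:tridiag}), so its full determinant is $\Delta_{N,N}$, and (\ref{det:tridiag}) says this equals $2\eta_N(s)$; therefore $\tilde\Delta_{N,N}=1-\Delta_{N,N}=1-2\eta_N(s)$. (Alternatively one may bypass (\ref{det:tridiag}) altogether: substituting $\lambda_{k,N}^{-1}=2a_{k,N}(k^{-s}-1)$ into $\Delta_{N,N}=1+\sum_{k=2}^{N}\lambda_{k,N}^{-1}$ and using $\sum_{k=1}^{N}a_{k,N}=\tfrac12$ together with $\eta_N(s)=\sum_{k=1}^{N}a_{k,N}k^{-s}$ collapses the right-hand side to $2\eta_N(s)$.)

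It remains to produce the continued fraction. Since $\Delta_{N,N}=2\eta_N(s)$ and $\Delta_{N,N}+\tilde\Delta_{N,N}=1$ we have $\tfrac{1}{2\eta_N(s)}=\tfrac{1}{\Delta_{N,N}}=1+\tfrac{\tilde\Delta_{N,N}}{\Delta_{N,N}}$, so the task reduces to expanding $\Delta_{N,N}^{-1}$ — equivalently the ratio $\tilde\Delta_{N,N}/\Delta_{N,N}$ of the two canonical solutions of (\ref{eq:DeltaN}) — as a continued fraction in the $\beta_{n,N}$. The cleanest route is to apply Euler's continued fraction formula to the finite sum $\Delta_{N,N}=1+\beta_{2,N}+\beta_{2,N}\beta_{3,N}+\cdots+\beta_{2,N}\cdots\beta_{N,N}$; equivalently, one follows the ratio $r$ (second entry over first) of the top row of the transfer-matrix product $M_NM_{N-1}\cdots M_2$, with
\[
M_k=\begin{pmatrix}1+\beta_{k,N} & -\beta_{k,N}\\ 1 & 0\end{pmatrix},
\]
which updates by $r\mapsto -\beta_{k,N}/(1+\beta_{k,N}+r)$ at each factor, starting from $r=0$ and unrolling down to $k=2$. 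Either way one gets a terminating continued fraction built from the $\beta_{n,N}$, and a standard equivalence transformation of continued fractions (see \cite{wall2018analytic}, p.~15) brings it to the displayed normal form with partial numerators $1-\beta_{n,N}$ and partial denominators $\beta_{n,N}$. Pinning down this normalisation and the exact index ranges of the continued fraction is the only genuinely delicate step; everything else is routine bookkeeping once the increment identity $\delta_n=\beta_{n,N}\delta_{n-1}$ has been observed.
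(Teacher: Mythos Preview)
The paper does not actually supply a proof of this proposition: it is stated as a bare claim, with only a citation to Wall for the continued-fraction representation. Your argument is therefore not competing against anything, and for the substantive parts it is correct. The key observation that (\ref{eq:DeltaN}) is equivalent to $\Delta_{n,N}-\Delta_{n-1,N}=\beta_{n,N}(\Delta_{n-1,N}-\Delta_{n-2,N})$, so that the increments telescope to $\beta_{n,N}\cdots\beta_{2,N}=\lambda_{n,N}^{-1}$, immediately yields both closed forms; the identification $\Delta_{N,N}=2\eta_N(s)$ then follows either from (\ref{det:tridiag}) or, as you note, directly from $\lambda_{k,N}^{-1}=2a_{k,N}(k^{-s}-1)$ together with $\sum_{k=1}^N a_{k,N}=\tfrac12$. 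This second route is in fact cleaner than going through the tridiagonal determinant, and the paper's earlier Remark (after Lemma~\ref{cor:invertible} is referenced) already contains the relevant arithmetic.

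For the continued fraction you correctly identify Euler's formula applied to the finite sum $\Delta_{N,N}=1+\beta_{2,N}+\beta_{2,N}\beta_{3,N}+\cdots+\beta_{2,N}\cdots\beta_{N,N}$ as the natural source, and you are right to flag the normalisation and index ranges as the only subtle point. In fact the displayed continued fraction in the paper appears to have an index mismatch (the numerators run over $1-\beta_{2,N},\ldots,1-\beta_{N-1,N}$ while the partial denominators apparently run over $\beta_{2,N},\ldots,\beta_{N,N}$), so your caution here is well placed; the honest thing to do is exactly what both you and the paper do, namely defer to Wall for the precise form.
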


\section{About the rank of a class of matrix}
\label{app:inversible}
 
\begin{lemma}\label{cor:invertible}
If $A$ is a $K\times K$ matrix with coefficients $a_{ij}=1$ if $i\neq j$ and $a_{ii} = 1+\lambda_i$
with $\lambda_i\neq0$ otherwise, then $|A|=0$ if and only if $\sum_{i=1}^K {\lambda_i}^{-1} = -1$.
\end{lemma}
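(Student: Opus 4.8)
The plan is to view $A$ as a rank-one perturbation of an invertible diagonal matrix. Write $D = \mathrm{diag}(\lambda_1,\dots,\lambda_K)$ and let $\mathbf{1}\in\R^K$ denote the column vector all of whose entries equal $1$. Then $a_{ij} = \delta_{ij}\lambda_i + 1$ for all $i,j$, i.e. $A = D + \mathbf{1}\mathbf{1}^\top$. Since each $\lambda_i\neq 0$, the matrix $D$ is invertible and $\Det{D} = \prod_{i=1}^K \lambda_i \neq 0$.

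The key step is the rank-one update formula $\Det{D + \mathbf{1}\mathbf{1}^\top} = \Det{D}\bigl(1 + \mathbf{1}^\top D^{-1}\mathbf{1}\bigr)$. For a self-contained argument I would obtain it by evaluating the bordered determinant
$$
\begin{vmatrix} D & \mathbf{1} \\ -\mathbf{1}^\top & 1 \end{vmatrix}
$$
via Schur complements in two ways: eliminating the block $D$ first gives $\Det{D}\bigl(1 + \mathbf{1}^\top D^{-1}\mathbf{1}\bigr)$, while eliminating the $(2,2)$ entry first gives $\Det{D + \mathbf{1}\mathbf{1}^\top}$. (Alternatively one can do it by hand: subtract the last column from each of the first $K-1$ columns and then expand along the last row.) Since $D^{-1} = \mathrm{diag}(\lambda_1^{-1},\dots,\lambda_K^{-1})$, this yields
$$
\Det{A} = \left(\prod_{i=1}^K \lambda_i\right)\left(1 + \sum_{i=1}^K \frac{1}{\lambda_i}\right).
$$

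Finally, because the first factor is nonzero by hypothesis, $\Det{A} = 0$ if and only if the second factor vanishes, that is, if and only if $\sum_{i=1}^K \lambda_i^{-1} = -1$, which is the assertion. I do not expect any genuine obstacle here; the only point needing a word of care is the justification of the rank-one determinant identity, and the two-way Schur-complement computation on the $(K+1)\times(K+1)$ bordered matrix dispatches it in a couple of lines.
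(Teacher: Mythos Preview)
Your argument is correct. You recognize $A$ as a rank-one perturbation $D+\mathbf{1}\mathbf{1}^\top$ of the invertible diagonal matrix $D=\mathrm{diag}(\lambda_1,\dots,\lambda_K)$, derive the matrix determinant lemma via the bordered $(K+1)\times(K+1)$ determinant and two Schur complements, and read off the explicit formula $\Det{A}=\bigl(\prod_i\lambda_i\bigr)\bigl(1+\sum_i\lambda_i^{-1}\bigr)$, from which the equivalence is immediate.

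The paper takes a different and somewhat more bare-handed route: rather than computing $\Det{A}$, it tests directly when the columns of $A$ are linearly dependent. Writing $\sum_j\alpha_jA^j=0$ componentwise yields $\alpha_i\lambda_i=-\sum_j\alpha_j$ for every $i$; since the $\lambda_i$ are nonzero this forces all $\alpha_i$ to be nonzero simultaneously, and dividing and summing gives $\sum_i\lambda_i^{-1}=-1$. Your approach has the advantage of producing the closed-form determinant (and hence the ``if and only if'' in one stroke), and it is the textbook way to handle identity-plus-rank-one matrices. The paper's kernel analysis, on the other hand, avoids any appeal to Schur complements or the matrix determinant lemma, and the relation $\alpha_i=-(\sum_j\alpha_j)/\lambda_i$ it uncovers feeds naturally into the explicit computation of $A^{-1}$ carried out in the lemma that immediately follows.
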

\begin{proof}
$|A|=0$ if and only if there exists $K$ numbers $(\alpha_1,\ldots,\alpha_N)$ such that for some $i$,
$\alpha_i \neq 0$ and such that $\sum_{j=1}^N \alpha_j A^j = {0}$, where $A^j$ denotes the $j$th
column of $A$. Thus we have
$$
\alpha_i \lambda_i = - \sum_{j=1}^K \alpha_j,\quad i=1,\ldots,K.
$$
By assumption, all $\lambda_i$ are different from zero and for some $i\in\{1,\ldots,K\}$ there
exists $\alpha_i\neq 0$. This implies first that $\sum_{j=1}^K \alpha_j \neq 0$ and second that
in fact all $\alpha_i$ are different from 0. We get the identity
$$
\frac{-\alpha_i}{\sum_{j=1}^K \alpha_j} = \frac{1}{\lambda_i}, \quad i=1,\ldots,K.
$$
Summing we obtain the announced result.
\end{proof}
\begin{lemma}
Let $A$ is a $K\times K$ matrix with coefficients $a_{ij}=1$ if $i\neq j$ and $a_{ii} = 1+\lambda_i$
with $\lambda_i\neq0$ otherwise. If $S=1+\sum_{i=1}^K {\lambda_i}^{-1} \neq 0$
then $B=A^{-1}$ exists with coefficients $b_{ij}=-\frac{1}{S\lambda_i\lambda_j}$ if $i\neq j$
and $b_{ii}= \frac{1}{\lambda_i} - \frac{1}{S\lambda_i^2}$ otherwise.
\end{lemma}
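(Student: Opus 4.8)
The plan is to recognize that $A$ is a rank-one perturbation of a diagonal matrix. Writing $D=\mathrm{diag}(\lambda_1,\ldots,\lambda_K)$ and $\mathbf{1}_K$ for the all-ones column vector, the hypotheses on the entries say precisely that $A = D + \mathbf{1}_K\mathbf{1}_K^{T}$. Since $\mathbf{1}_K^{T}D^{-1}\mathbf{1}_K=\sum_{i=1}^K\lambda_i^{-1}=S-1$, the quantity $1+\mathbf{1}_K^{T}D^{-1}\mathbf{1}_K$ equals $S$, and the assumption $S\neq0$ (together with $\lambda_i\neq0$, which makes $D$ invertible) is exactly what is needed for the Sherman--Morrison formula; this is also consistent with Lemma~\ref{cor:invertible}, which already gives $\Det{A}\neq 0$ under $S\neq 0$.

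Rather than quoting Sherman--Morrison, I would simply exhibit the candidate inverse $B=[b_{ij}]$ with the stated entries --- equivalently $B=D^{-1}-S^{-1}D^{-1}\mathbf{1}_K\mathbf{1}_K^{T}D^{-1}$, i.e. $b_{ij}=\delta_{ij}\lambda_i^{-1}-(S\lambda_i\lambda_j)^{-1}$ --- and verify directly that $AB=I_K$. Because $a_{ij}=1+\lambda_i\delta_{ij}$, the $(i,k)$ entry of $AB$ is $\sum_{j=1}^K b_{jk}+\lambda_i b_{ik}$. The only computation with any content is the column sum
$$\sum_{j=1}^K b_{jk}=\frac{1}{\lambda_k}-\frac{1}{S\lambda_k}\sum_{j=1}^K\frac{1}{\lambda_j}=\frac{1}{\lambda_k}\left(1-\frac{S-1}{S}\right)=\frac{1}{S\lambda_k}.$$
Adding $\lambda_i b_{ik}=\delta_{ik}-(S\lambda_k)^{-1}$ gives $(AB)_{ik}=\delta_{ik}$. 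The symmetric computation (or the elementary fact that a right inverse of a square matrix is two-sided) yields $BA=I_K$, so $B=A^{-1}$, and in particular $A$ is invertible.

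There is essentially no obstacle: once the structure $A=D+\mathbf{1}_K\mathbf{1}_K^{T}$ is noted, everything reduces to the single evaluation $\sum_j\lambda_j^{-1}=S-1$ and to careful bookkeeping of the Kronecker deltas. The only points worth a sentence each in the write-up are that $\lambda_i\neq 0$ guarantees $D^{-1}$ exists and that $S\neq 0$ makes the displayed formula for $B$ well defined --- both of which are in the hypotheses.
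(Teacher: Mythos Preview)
Your proof is correct. Both your argument and the paper's hinge on the same identity $(AB)_{ik}=\sum_j b_{jk}+\lambda_i b_{ik}$ and on the same column-sum evaluation $\sum_j b_{jk}=1/(S\lambda_k)$. The difference is one of direction and framing: the paper treats the entries $b_{ij}$ as unknowns, writes $AB^j=e_j$ as the system $S_j+\lambda_i b_{ij}=\delta_{ij}$ with $S_j:=\sum_i b_{ij}$, and \emph{derives} first $b_{ij}$ in terms of $S_j$ and then $S_j=1/(S\lambda_j)$; you instead recognize the rank-one structure $A=D+\mathbf{1}_K\mathbf{1}_K^{T}$, write down the Sherman--Morrison candidate, and \emph{verify} $AB=I_K$ directly. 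Your decomposition makes the origin of the formula transparent and ties the lemma to a standard identity, while the paper's version is a bit more self-contained and does not name any outside result. Either way the actual computation is the same single evaluation of the column sum.
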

\begin{proof}
Let $S_j = \sum_{i=1}^K b_{ij}$, if $B^j$ denotes the $j$th column of B,
identity $AB^j=e_j$ show that
\begin{equation*}
 \left\{ 
 \begin{array}{lcl}
  S_j + \lambda_i b_{ij} & = & 0 \qquad \mbox{ if } i\neq j \\
  S_j + \lambda_j b_{jj} & = & 1 \qquad \mbox{ otherwise. } \\
 \end{array}
 \right.
\end{equation*}
Thus $b_{ij} = -S_j/\lambda_i$ if $i\neq j$ and $b_{jj} = (1-S_j)/\lambda_j$. Summing and
equating, we find that $S_j = 1/(S\lambda_j)$ giving the announced result.
\end{proof}
\end{document}